\pgfplotsset{compat=1.16}
\tikzset{cross/.style={cross out, draw=black, minimum size=2*(#1-\pgflinewidth), inner sep=0pt, outer sep=0pt},
	cross/.default={1pt}}
\definecolor{octavePink}{RGB}{255,0,255}
\definecolor{antiquefuchsia}{rgb}{0.57, 0.36, 0.51}
\newcommand{\abs}[1]{\ensuremath{\left| #1 \right| }}
\newcommand{\bigabs}[1]{\ensuremath{\big\lvert #1 \big\rvert}}
\newcommand{\card}[1]{\lvert#1\rvert}
\newcommand{\cons}{\kappa}
\newcommand{\beconst}{\gamma}
\newcommand{\conf}{{\mathrm{A}}}
\newcommand{\noise}{\sigma}
\newcommand{\tol}{\theta}
\newcommand{\was}{\mathrm{W}}
\newcommand{\grid}{\Lambda}
\newcommand{\gridl}{\Lambda_L}
\newcommand{\gridlmo}{\Lambda_{L-1}}
\newcommand{\gridlmoplus}{\Lambda_{L+2\delta-1}}
\newcommand{\gridlplus}{\Lambda_{L+2\delta}}
\newcommand{\domain}{\Omega_L}
\newcommand{\domaintol}{\Omega_{L-\tol}}
\newcommand{\domainplus}{\Omega_{L+1}}
\newcommand{\domainplusdelta}{\Omega_{L+2\delta}}
\newcommand{\domainminus}{\Omega_{L-2\delta}}
\newcommand{\domainminusone}{\Omega_{L-1}}
\newcommand{\gp}{\lambda}
\newcommand{\gpp}{\mu}
\newcommand{\outset}{\mathrm{Z}}
\newcommand{\outseta}{\mathrm{Z}_1}
\newcommand{\logi}{\log(1/\delta)}
\newcommand{\map}{\Phi}
\newcommand{\dm}{dm}
\newcommand{\alstep}[1]{\noindent {\bf #1}}
\def\E{\mathbb{E}}
\newcommand{\Var}{\operatorname{Var}}
\newcommand{\Cov}{\operatorname{Cov}}
\def\bC{\mathbb{C}}
\def\bt{F}
\def\genw{\varphi}
\def\discSTFT{\widehat{H}}
\def\discBargmann{\widehat{F}}
\def\symbOrder{\preccurlyeq}
\newcommand{\zdm}[1]{\widehat{Z}_{#1}^{\delta}}
\newcommand{\zr}[2]{\widetilde{Z}_{#1}^{#2}}
\newcommand{\zd}[2]{\widehat{Z}_{#1}^{#2}}
\newcommand{\ei}[3]{\widehat{\rho}(#1, #2, #3)}
\newcommand{\einz}[3]{\widehat{\beta}(#1, #2, #3)}
\newcommand{\subs}{\mathcal{S}}
\newcommand{\sm}[0]{S1}
\newtheorem{lemma}{Lemma}[section]
\newtheorem{theorem}[lemma]{Theorem}
\newtheorem{prop}[lemma]{Proposition}
\theoremstyle{definition}
\newtheorem{rem}[lemma]{Remark}
\numberwithin{equation}{section}
\numberwithin{equation}{section}
\author[L. A. Escudero]{Luis Alberto Escudero}
\address{L. A. Escudero, Acoustics Research Institute, Austrian Academy of Sciences,
  Vienna, Austria}
\email{lescudero@kfs.oeaw.ac.at}
\author[N. Feldheim]{Naomi Feldheim}
\address{N. Feldheim, Bar-Ilan University, Ramat-Gan, Israel}
\email{naomi.feldheim@biu.ac.il}
\author[G. Koliander]{G\"{u}nther Koliander}
\address{G. Koliander, Faculty of Mathematics, University of Vienna,
  Vienna, Austria\\
	and\\
	Acoustics Research Institute, Austrian Academy of Sciences, 
  Vienna, Austria}
\email{gkoliander@kfs.oeaw.ac.at}
\author[J. L. Romero]{Jos\'{e} Luis Romero}
\address{J. L. Romero, Faculty of Mathematics, University of Vienna,
  Vienna, Austria\\
	and\\
	Acoustics Research Institute, Austrian Academy of Sciences, 
  Vienna, Austria}
\email{jose.luis.romero@univie.ac.at}
\title[Efficient computation of the zeros of the Bargmann transform]{Efficient computation of the zeros of the Bargmann transform under additive white noise}
\thanks{L.\ A.\ E., G.\ K., and J.\ L.\ R.\ gratefully acknowledge support from the Austrian Science Fund (FWF): Y 1199 and P 29462. N.\ F.\ gratefully acknowledges support from the Israel Science Foundation (ISF) grant no.\ 1327/19.}
\keywords{Bargmann transform, random analytic function, short-time Fourier transform, zero set, computation, Wasserstein metric}
\subjclass[2020]{65R10, 62M30, 60G70, 60G55, 60G15, 30H20}
\begin{document}

\begin{abstract}
We study the computation of the zero set of the Bargmann transform of a signal contaminated with complex white noise, or, equivalently, the computation of the zeros of its short-time Fourier transform with Gaussian window. We introduce the \emph{adaptive minimal grid neighbors} algorithm (AMN), a variant of a method that has recently appeared in the signal processing literature, and prove that with high probability it computes the desired zero set. More precisely, given samples of the Bargmann transform of a signal on a finite grid with spacing $\delta$, AMN is shown to compute the desired zero set up to a factor of $\delta$ in the Wasserstein error metric, with failure probability $O(\delta^4 \log^2(1/\delta))$. We also provide numerical tests and comparison with other algorithms.
\end{abstract}

\maketitle

\section{Introduction}
\subsection{The Bargmann transform and its zeros}

The Bargmann transform of a real variable function $f \in L^2(\mathbb{R})$ is the entire function
\begin{align}\label{eq_bar}
F (z) = \big(\tfrac{2}{\pi}\big)^{\frac{1}{4}} \, e^{{-z^2}/{2}} \,
\int_{\mathbb{R}} f(t) e^{-{t^2}+2tz} \, dt, \qquad z \in \mathbb{C}.
\end{align}
Originally introduced as a link between configuration and phase space in quantum mechanics \cite{MR157250}, the Bargmann transform 
was later recognized as a powerful tool in signal analysis \cite{MR924682} because it encodes the correlations between the signal~$f$ and the time-frequency shifts of the Gaussian function $g(t) = \big(\tfrac{2}{\pi}\big)^{\frac14} \,e^{-t^2}\,$:
\begin{align}\label{eq_stft}
e^{-i x y} e^{-\frac12(x^2+y^2)}
F(x-iy) = \int_{\mathbb{R}} f(t) \overline{g(t-x) e^{2i t y}}
\, dt, \qquad x,y \in \mathbb{R}.
\end{align}
In the jargon of time-frequency analysis, the right-hand side of \eqref{eq_stft} is known as the \emph{short-time Fourier transform} of $f$ with Gaussian window, and measures the contribution to $f(t)$ of the frequency $y$ near $t=x$. 

In practice, the values of the short-time Fourier transform of a signal~$f$ are only available on (a finite subset of) a grid
\begin{align}\label{eq_grid1}
\{ (\delta k, \delta j): k,j \in \mathbb{Z} \}, \qquad \delta>0,
\end{align}
and possibly only approximately so due to numerical errors.
The goal of \emph{Gabor analysis} is to extract useful information about $f$ from such limited measurements. Equivalently, by \eqref{eq_stft}, the task is to capture the analytic function $F$ given a limited number of its samples on a grid. This second point of view led to the most conclusive results in Gabor theory, such as the complete description of all grids \eqref{eq_grid1} for which the Gabor transform fully retains the original analog signal $f$ \cite{MR924682, MR1188007, MR1173117, MR1173118}.

While Gabor signal analysis has traditionally focused on large values of the short-time Fourier transform \eqref{eq_stft}, recent work has brought to the foreground the rich information stored in its zeros, especially when the signal is contaminated with noise. Heuristically, the zeros of the Bargmann transform of noise exhibit a rather rigid random pattern with predictable statistics, from which the presence of a deterministic signal can be recognized as a salient local perturbation \cite{gardner2006sparse, 7180335, 7869100}. Remarkably, the Bargmann transform of white noise has been identified as a certain Gaussian analytic random function
\cite{MR4047541, MR4162314}, and consequently the well-researched statistics of their zero sets \cite{gafbook, MR2643444} can be leveraged in practice \cite[Chapters 13 and 15]{flandrin2018explorations}, \cite{MR4047541}. The particular structure observed in the zeros of the Bargmann transform under even a moderate amount of white noise has also been invoked as explanation for the sparsity resulting from certain non-linear procedures to sharpen spectrograms \cite{gardner2006sparse}, as the zeros of the Bargmann transform are repellers of the reassignment vector field \cite[Chapter 12]{flandrin2018explorations}. The practical exploitation of such insights requires an effective computational link between finitely given data on the one hand and zeros of Bargmann transforms of analog signals on the other.

\subsection{Computation of zero sets}
Suppose that the values of the Bargmann transform $F$ of a signal $f$ are given on a grid 
\begin{align}\label{eq_grid}
\grid = \{ \delta k + i \delta j: k,j \in \mathbb{Z} \}, \qquad \delta>0,
\end{align}
and we wish to compute an approximation of 
$\{F=0\}$, the zero set of $F$, within the square
\begin{align}\label{eq_domain}
\domain = \{x + i y: |x|, |y| \leq L\}.
\end{align}
More realistically, we only have access to samples of $F$ on those grid points near the computation domain, e.g., on
\begin{align}\label{eq_gridl}
\gridl = \{ \delta k + i \delta j: k,j \in \mathbb{Z}, |\delta k|, |\delta j| \leq L \}.
\end{align}
The inverse of the spacing of the grid, $1/\delta$, will be called the \emph{resolution} of the data.


\subsubsection*{Thresholding}
The most naive approach to compute $\{F=0\}$ is \emph{thresholding}: one selects all grid points $\gp$ such that $|F(\lambda)|$ is below a certain threshold $\varepsilon>0$:
\begin{align}\label{eq_threshold_test}
e^{-\frac12 |\gp|^2} |F(\lambda)| < \varepsilon.
\end{align}
The normalizing weight $e^{-\frac12|\gp|^2}$ is motivated by \eqref{eq_stft}, as the short-time Fourier transform of a typical signal can be expected to be bounded. One disadvantage of this approach is that it requires an educated choice for the threshold $\varepsilon$. Moreover, computations with various reasonable choices of thresholds, such as quantiles of $e^{-\frac12 |\gp|^2} |F(\lambda)|$ calculated over all grid points $\gp$, either fail to compute many of the zeros or capture too many points (see Figure \ref{fig_threshold}).

\begin{figure}[tb]
	\centering
    \includegraphics[width=\textwidth,trim={5cm 0 15cm 0},clip]{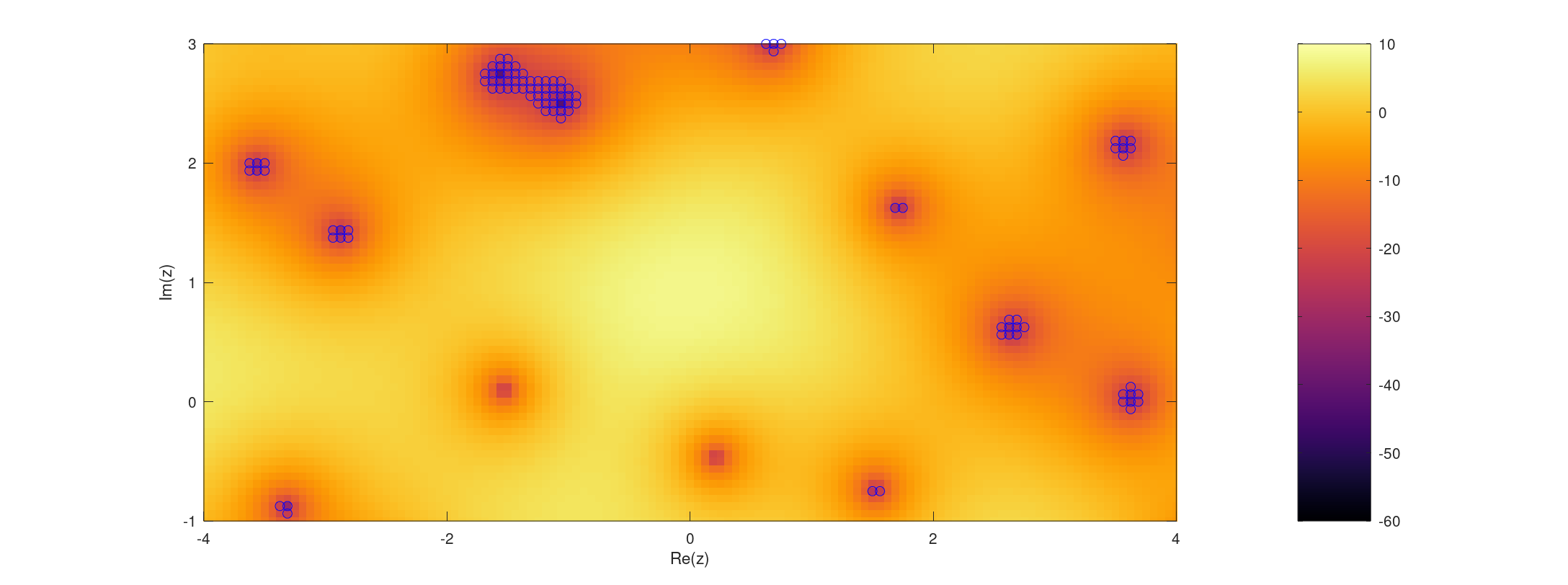}
	\caption{Calculation of zero sets by thresholding: Values below the same threshold (marked by circles) fail to detect some zeros and at the same time cannot clearly separate other zeros.}
	\label{fig_threshold}
\end{figure}

\subsubsection*{Extrapolation}
One may consider using the samples of $F$ on the finite grid \eqref{eq_gridl} to reconstruct the signal $f$, resample $F$ at arbitrarily high density, and thus calculate more easily the zero set $\{F=0\}$. However, computation of zeros through extrapolation may be inaccurate: while the samples of $F$ on the infinite grid \eqref{eq_grid}
determine $F$ as soon as $\sqrt{\pi} \cdot \delta < 1$ \cite{MR924682, MR1188007, MR1173118}, the truncation errors involved in the approximation of $F$ near $\domain$ from finite data \eqref{eq_gridl} can only be neglected at very high resolution $1/\delta$. Even if the values of $F$ are successfully extrapolated to a higher resolution grid, the remaining computation is still not trivial, as, for example, simple thresholding may fail even at high resolution (see 
Figure \ref{fig_comparison_methods_nds} and Section \ref{sec_nums}).

\subsubsection*{Minimal Grid Neighbors}
A greatly effective numerical recipe 
for the computation of zeros of the Bargmann transform
can be found in the code accompanying \cite{7180335} --- although not explicitly described in the text. A grid point $\gp$ is selected as a numerical approximation of a zero if
$e^{-\frac12 |\gp|^2} |F(\lambda)|$ is minimal among grid neighbors, i.e., 
\begin{align}\label{eq_flandrin}
e^{-\frac12 |\gp|^2} |F(\lambda)| \leq e^{-\frac12 |\gpp|^2} |F(\gpp)|, \qquad |\gp-\gpp|_\infty=\delta,
\end{align}
where $|z|_\infty = \max\{|x|,|y|\}$. The subset of points that pass the test furnish the computation of $\{F=0\}$.
 This method, which we call \emph{minimal grid neighbors} (MGN), performs impressively as long as the grid resolution is moderately high. Indeed, we understand that the method is behind the simulations in \cite[Chapter~15]{flandrin2018explorations} which quite faithfully reproduce the statistics of the zeros of the Bargmann transform of complex white noise (that are known analytically \cite{MR4047541, gafbook}). 
The MGN algorithm was also used to produce the plots in \cite{MR4047541}, as pointed out in \cite[Section~5.1.1.]{MR4047541};
see also \cite[Section~5]{MR4162314},
\cite[Section~IV]{koliander2019filtering}, and \cite{techrepsampta}. 
Heuristically, the test \eqref{eq_flandrin} succeeds in identifying zeros due to the analyticity of $F$, which implies that $|F(z)| e^{-\frac12 |z|^2}$ does not have non-zero local minima \cite[Section 8.2.2]{gafbook}. Remarkably, \eqref{eq_flandrin} is also effective even if the comparison involves only neighboring \emph{grid} points. 

The MGN algorithm performs equally well when calculating the zeros of the Bargmann transform of a signal
\begin{align}\label{eq_signal}
f = f_1 + \sigma \cdot \mathcal{N}
\end{align}
composed of a deterministic real-variable function $f_1$ plus complex white noise with variance $\sigma^2>0$. The presence of a certain amount of randomness must be behind the success of the algorithm, as, for $\sigma=0$, the method cannot be expected to succeed. Indeed, one can engineer a deterministic signal $f$ where the detection of zeros fails, as the value of its Bargmann transform $F$ can be freely prescribed on any given finite subset of the computation domain \cite{MR1173118}. We are unaware of performance guarantees for MGN.

\subsection{Contribution}
In this article we introduce a variant of MGN, called Adaptive Minimal Grid Neighbors (AMN). The algorithm is based on a comparison similar to \eqref{eq_flandrin} but incorporates \emph{an adaptive decision margin}, that depends on the particular realization of $F$. While AMN has the same mild computational complexity and similar practical effectiveness as MGN, we are able to estimate the accuracy and confidence of the computation with AMN in terms of the grid resolution.
In this way, we show that AMN is \emph{probably approximately correct} for the signal model \eqref{eq_signal}, in the sense that it computes the zero set with high probability up to the resolution of the data.

On the one hand, we present what to the best of our knowledge are the first formal guarantees for the approximate computation of zero sets of analytic functions from grid values. In fact, besides its main purpose of computation with specific data, the AMN algorithm offers a computationally attractive and provably correct method to \emph{simulate} zero sets of the Gaussian entire function \eqref{eq_gef}, by running the procedure with simulated inputs. On the other hand, our analysis is a first step towards understanding the performance of MGN.

\section{Main Result}
\subsection{The adaptive minimal neighbors algorithm}
We now introduce a new algorithm to compute zero sets of Bargmann transforms. Suppose again that samples of an analytic function $F \colon \mathbb{C} \to \mathbb{C}$ are given on those points of the grid \eqref{eq_grid} that are near the computation domain \eqref{eq_domain}, say, on
\begin{align}\label{eq_gridplus}
\gridlplus = \{ \delta k + i \delta j: k,j \in \mathbb{Z}, |\delta k|, |\delta j| \leq L+2\delta \}.
\end{align}
For each grid point $\gp$ strictly inside the computation domain, $\gp \in \gridl = \grid \cap \domain$, we use the neighboring sample at $\gp + \delta$ to compute the following \emph{comparison margin}:
\begin{align}\label{eq_th_intro}
\eta_\gp = 
e^{-\frac{1}{2} | \gp|^2} 
\max\big\{
\abs{F(\gp)}, 
\tfrac{3}{4}\bigabs{
e^{-\delta \bar{\gp}} F(\gp + \delta) - F(\gp)
}
\big\}.
\end{align}
The margin $\eta_\gp$ therefore depends on the particular realization of $F$. To motivate the definition, note that, when $\gp=0$,
the maximum in \eqref{eq_th_intro} is taken over
$|F(0)|$ and the absolute value of an incremental approximation of $\partial F(0)$, where $\partial F=\frac12(\frac{d}{dx} F-i \frac{d}{dy} F)$. The comparison margin thus incorporates the size and oscillation of $F$ at $z=0$. In general, $\eta_\gp$ has a similar interpretation with respect to the \emph{covariant derivative}
\begin{align}\label{eq_covder}
\bar{\partial}^* F(z) = \bar{z}\,F(z) - \partial F(z),
\end{align}
and, indeed, $\eta_\gp$ is defined so that
\begin{align}\label{eq_intro_approx}
\eta_\gp \approx e^{-\frac12|\gp|^2}
\max\big\{ |F(\gp)|,
\tfrac{3}{4} \abs{\bar{\partial}^* F(\lambda)} \delta \big\}.
\end{align}
The differential operator $\bar{\partial}^* F$ plays a distinguished role in the analysis of vanishing orders of Bargmann transforms \cite{bs93, esharo21} because it commutes with the translational symmetries of the space that they generate (the Bargmann-Fock shifts defined in Section \ref{sec_fock}).

The first step of the algorithm selects all grid points $\gp \in \gridl$ that pass the following comparison test:
\begin{align}\label{eq_test_intro}
e^{-\frac12 |\gpp|^2}
|F(\gpp)| \geq e^{-\frac12|\gp|^2} |F(\gp)| + \eta_{\gp}, \quad
\mbox{ whenever } |\gp-\gpp|_\infty = 2\delta, \quad \gpp \in \grid.
\end{align}
In contrast to \eqref{eq_flandrin}, the comparison in \eqref{eq_test_intro} does not involve the immediate grid neighbors of $\gp$ but rather those points lying on the square centered at $\gp$ with half-side-length $2\delta$; see Figure \ref{fig:diagram}. 
\begin{figure}[tbph]
	\centering
	\begin{subfigure}[b]{0.45\textwidth} 
    \centering\captionsetup{width=.8\linewidth}%
		\includegraphics[width=\textwidth,trim={2cm 0 6cm 0},clip]{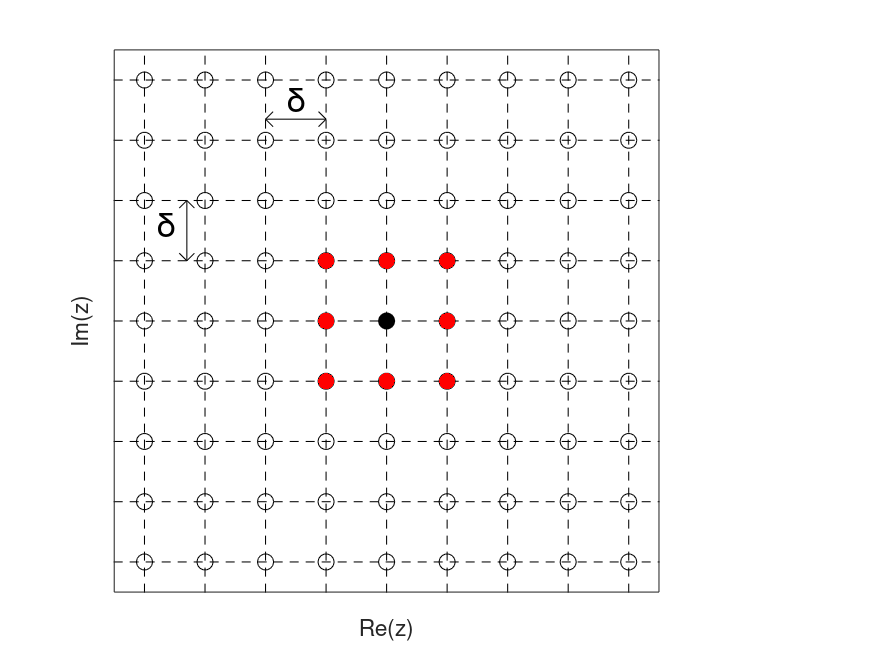}
		\caption{One of the immediate neighbors is used to calculate a comparison margin.}
		\label{fig:diagram_one_delta}
	\end{subfigure}%
	\begin{subfigure}[b]{0.45\textwidth}
		\centering\captionsetup{width=.8\linewidth}%
		\includegraphics[width=\textwidth,trim={2cm 0 6cm 0},clip]{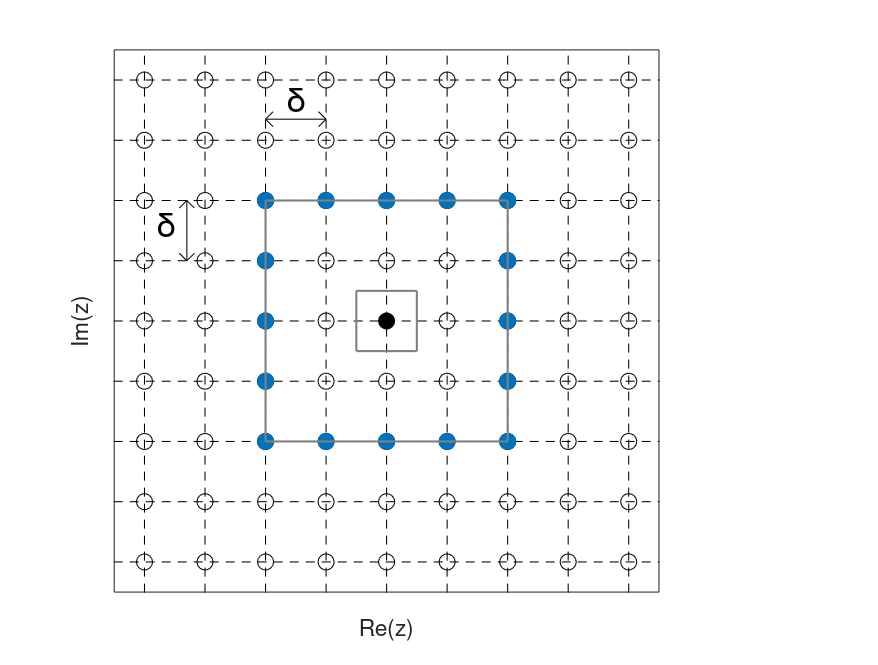}
		\caption{The weighted values of $F$ are compared against points in the larger box.}
		\label{fig:diagram_three_delta}
	\end{subfigure}
	\caption{The selection step of AMN.}
	\label{fig:diagram}
\end{figure}
(In particular, the test only involves grid points $\gpp \in \gridlplus$.) 
Intuitively, the larger 
distance between $\gp$ and $\gpp$ permits neglecting the error in the differential approximation
\eqref{eq_intro_approx}.

The use of non-immediate neighbors in \eqref{eq_test_intro} introduces a certain redundancy in the selection of numerical zeros, because the comparison boxes delimited by $\{\mu: |\gp-\gpp|_\infty = 2\delta\}$ overlap and, as a consequence, one zero of $F$ can trigger multiple positive tests; see Figure \ref{fig:before_sieving}. 
The second step of the algorithm sieves the selected points to enforce a minimal separation of $5\delta$ between different points. The algorithm is formally specified below.

\begin{figure}[tp]
	\centering
	\begin{subfigure}[t]{0.45\textwidth}    
    \centering\captionsetup{width=.8\linewidth}%
		\includegraphics[width=\textwidth,trim={2cm 0 6cm 0},clip]{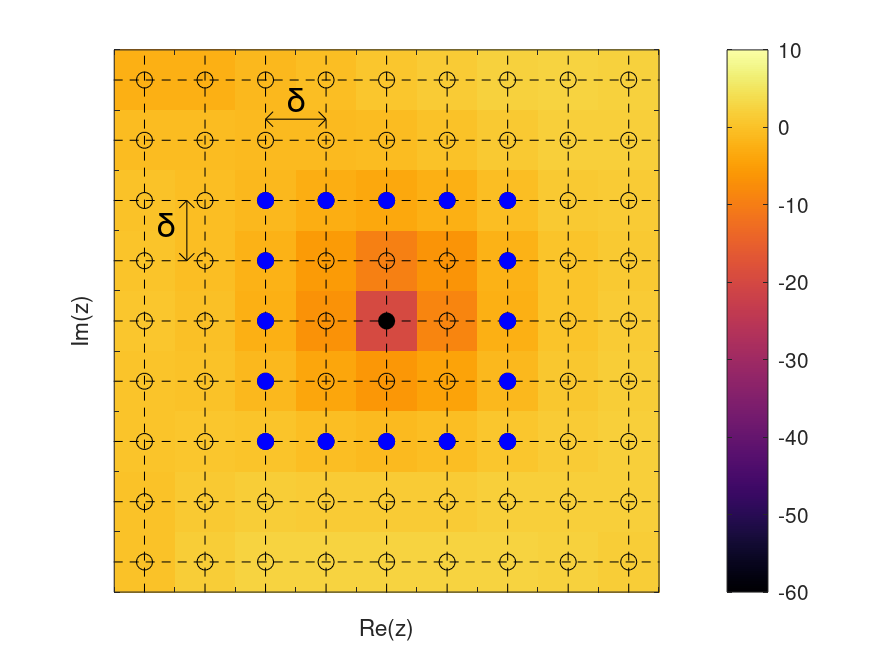}
		\caption{A point passes the selection test.}
		\label{fig:left_sieving}
	\end{subfigure}%
	\begin{subfigure}[t]{0.45\textwidth}
		\centering\captionsetup{width=.8\linewidth}%
		\includegraphics[width=\textwidth,trim={2cm 0 6cm 0},clip]{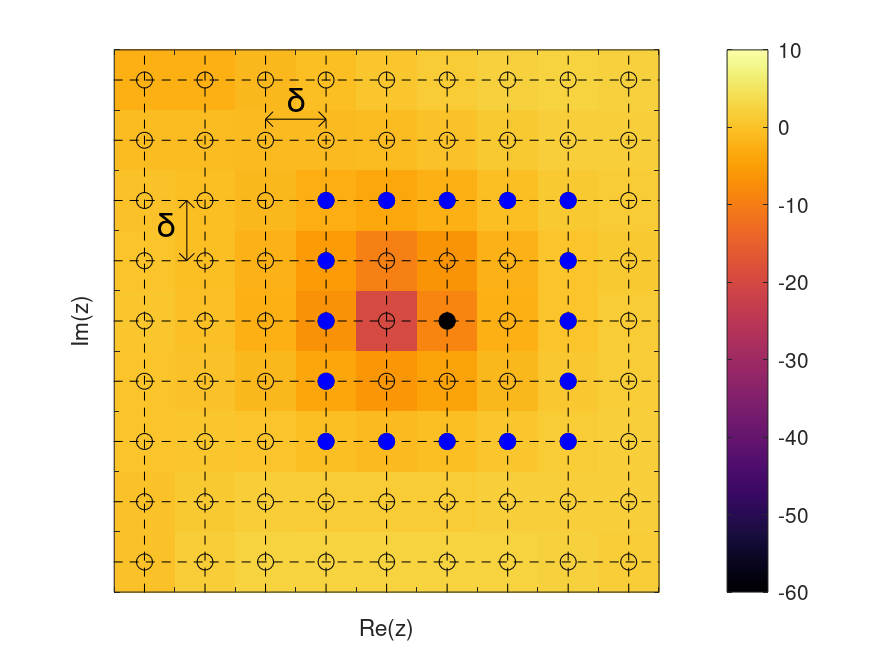}
		\caption{The same zero causes a second detection.}
		\label{fig:right_sieving}
	\end{subfigure}
	\caption{Sliding the test box.}
	\label{fig:before_sieving}
\end{figure}

\noindent\rule{\textwidth}{1pt}
\noindent {\bf Algorithm AMN: \,}{Compute zero set of $F$ inside $\domain = [-L,L]^2$}\vspace{-0.2cm}\\
\noindent\rule{\textwidth}{1pt}

\alstep{Input}: A domain length $L\geq 1$, a grid spacing parameter $\delta >0$ and samples of a function $F\colon \mathbb{C} \to \mathbb{C}$ on the grid points $\gridlplus = \grid \cap [-(L+2\delta),L+2\delta]^2$. 

\medskip

\alstep{Selection step}: For each grid point $\gp \in \gridl = \grid \cap \domain$ inside the target domain $\domain$, we define the following \emph{comparison margin}:
\begin{align}\label{eq_th}
\eta_\gp = 
e^{-\frac{1}{2} | \gp|^2} 
\max\big\{
\abs{F(\gp)}, 
\tfrac{3}{4}
\big|
e^{-\delta \bar{\gp}} F(\gp + \delta) - F(\gp)
\big|
\big\}.
\end{align}

\noindent The grid point $\gp$ is then \emph{selected} if the following test is satisfied:
\begin{align}\label{eq_test}
e^{-\frac12 |\gpp|^2}
|F(\gpp)| \geq e^{-\frac12|\gp|^2} |F(\gp)| + \eta_{\gp}, \quad
\mbox{ whenever } |\gp-\gpp|_\infty = 2\delta, \quad \gpp \in \grid.
\end{align}
(Note that the test \eqref{eq_test} only involves grid points $\gpp \in \gridlplus$.)

\noindent Let $\outseta$ be the set of all selected grid points.

\medskip

\alstep{Sieving step}: Use an off-the-shelf clustering algorithm to select a subset $\outset \subset \outseta$ that is $5\delta$ separated:
\begin{align}\label{eq_7s}
\inf \big\{ |\gp-\gpp|_\infty : \gp, \gpp \in \outset, \gp \not= \gpp \big\} \geq 5 \delta
\end{align}
and maximal with respect to this property, i.e., no proper superset satisfies \eqref{eq_7s}.

\noindent (One concrete implementation of the sieving step is described in Section \ref{sec_siev_step}.)

\medskip

\alstep{Output}: The set $\outset$.\\
\noindent\rule{\textwidth}{1pt}

\smallskip

\begin{rem}\label{rem_cons}
The constants $3/4$ in \eqref{eq_th}, $2$ in \eqref{eq_test}, and $5$ in \eqref{eq_7s} are to some extent arbitrary, and other choices lead to similar results. These particular values are chosen to aid the exposition rather than to optimize practical performance. In fact, these choices are suboptimal at low resolutions (see Section \ref{sec_nums}).
\end{rem}

\subsection{Performance guarantees for AMN}
To study the performance of the AMN algorithm we introduce the following \emph{input model}, which, as we will explain, corresponds to the Bargmann transform of an arbitrary signal contaminated with complex white noise of an arbitrary intensity.

\subsubsection*{Input model}
We consider a random entire function on the complex plane 
\begin{align}\label{eq_rf}
F = F^1 + \noise \cdot F^0,
\end{align}
where $F^1\colon \mathbb{C} \to \mathbb{C}$ is a deterministic entire function, $F^0$ is a (zero mean) Gaussian analytic function with correlation kernel:
\begin{align}\label{eq_gef}
\mathbb{E} \big\{ F^0(z) \cdot \overline {F^0(w)} \big\}=e^{z \bar{w}},
\qquad z,w \in \mathbb{C},
\end{align}
and $\noise>0$ is the \emph{noise level}. 
We assume that the deterministic function $F^1$ satisfies the quadratic exponential growth estimate
\begin{align}\label{eq_cf}
|F^1(z)| \leq \conf \cdot e^{\frac12|z|^2}, \qquad z \in \mathbb{C},
\end{align}
for some constant $\conf\geq 0$. 

As for $F^0$, the assumption means that for each $z_1, \ldots, z_n \in \mathbb{C}$, $(F^0(z_1), \allowbreak \ldots, \allowbreak F^0(z_n))$ is a normally distributed (circularly symmetric) complex random vector,
with mean zero and covariance matrix $\big[e^{z_k \overline{z_\ell}}\big]_{k,\ell}$.
Alternatively, $F^0$ can be described as
\begin{align}\label{eq_alt_gef}
F^0(z) = \sum_{n \geq 0} \tfrac{\xi_n}{\sqrt{n!}} z^n,
\end{align}
where $(\xi_n)_{n \geq 0}$ are independent standard complex random variables \cite{gafbook}.

\subsubsection*{Discussion of the model}
The random function $F^0$ is the Bargmann transform of standard complex white noise $\mathcal{N}$. As each realization of complex white noise is a tempered distribution, the computation of its Bargmann transform \eqref{eq_bar} is also to be understood in the sense of distributions, as in \cite{MR201959}. (See \cite{MR4162314} and \cite[Section~5.1]{gwhf} for a detailed discussion on this, and alternative approaches.)

We can similarly interpret $F^1$ as the Bargmann transform of a distribution $f^1$ on the real line \cite{MR201959}. The assumption \eqref{eq_cf} means precisely that $f^1$ belongs to the \emph{modulation space} $M^\infty(\mathbb{R})$ consisting of distributions with Bargmann transforms bounded with respect to the standard Gaussian weight --- or, equivalently, with bounded short-time Fourier transforms \cite{benyimodulation}. 
The modulation space $M^\infty(\mathbb{R})$ includes all square-integrable functions $f^1 \in L^2(\mathbb{R})$ and also many of the standard distributions used in signal processing.

In summary, the input model \eqref{eq_rf} corresponds exactly to the Bargmann transform of a random signal
\begin{align}\label{eq_signal_2}
f = f^1 + \sigma \cdot \mathcal{N},
\end{align}
where $f^1 \in M^\infty(\mathbb{R})$ and $\sigma \cdot \mathcal{N}$ is
complex white noise with standard deviation $\sigma$.

\subsubsection*{Performance analysis}
We now present the following performance guarantees, pertaining to the computation domain \eqref{eq_domain} and the acquisition grid \eqref{eq_gridplus}. To avoid immaterial technicalities, we assume that the corners of the computation domain lie on the acquisition grid.

\begin{theorem}\label{mth}
	Fix a domain width $L \geq 1$, a noise level $\sigma>0$, and a grid spacing $\delta >0$
	such that $L/\delta \in \mathbb{N}$.
	Let a realization of a random function $F$ as in \eqref{eq_rf} with \eqref{eq_gef} and \eqref{eq_cf} be observed on $\gridlplus$, and let $\outset$ be the output of the AMN algorithm.
	
	There exists an absolute constant $C$ such that, with probability at least
	\begin{align}\label{eq_suc_pro}
	1-C \, L^2  \exp\bigg(\frac{\conf^2}{8\sigma^2}\bigg)
\max\left\{1,\log^2(1/\delta)\right\}
\delta^{4},
	\end{align}
	there is an injective map $\map\colon \{F=0\} \cap \domain \to \outset$ with the following properties:
	
	$\bullet$ \emph{(Each zero is mapped into a near-by numerical zero)}
	\begin{align}\label{eq_dist}
	|\map(\zeta) - \zeta|_ \infty \leq 2 \delta, \qquad \zeta \in \{F=0\} \cap \domain.
	\end{align}
	
	$\bullet$ \emph{(Each numerical zero that is away from the boundary arises in this form)}
	
	For each $\gp \in \outset \cap \domainminus$ there exists
	$\zeta \in \{F=0\} \cap \domain$ such that $\gp=\map(\zeta)$.	
\end{theorem}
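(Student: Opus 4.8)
The plan is to reduce the theorem to a local statement: near each true zero $\zeta$ of $F$, the weighted magnitude $|F| e^{-\frac12|\cdot|^2}$ behaves like a bowl (a nondegenerate local minimum of value zero), and on every box far from any zero the function is bounded below and its variation is controlled by the covariant derivative $\bar\partial^* F$. Analyticity is what forces $|F(z)| e^{-\frac12|z|^2}$ to have no nonzero local minima, but to make the grid-based test \eqref{eq_test} provably succeed one needs quantitative control. So the first task is to establish, with high probability, a set of "good event" estimates for $F = F^1 + \sigma F^0$ on $\domainplusdelta$: (i) a pointwise upper bound on $|F| e^{-\frac12|\cdot|^2}$ and on the size of the first and second covariant derivatives (this makes the differential approximation $\eta_\gp \approx e^{-\frac12|\gp|^2}\max\{|F(\gp)|, \tfrac34|\bar\partial^*F(\gp)|\delta\}$ legitimate with error $O(\delta^2)$); (ii) a lower bound away from the zeros — roughly, on any $\delta$-scale box not meeting $\{F=0\}$, either $|F| e^{-\frac12|\cdot|^2}$ is bounded below by a constant times $\delta$, or $|\bar\partial^* F|$ is; and (iii) a nondegeneracy statement at each zero: the Hessian of $|F|^2 e^{-|\cdot|^2}$, equivalently $|\bar\partial^* F(\zeta)|^2$, is bounded below. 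For the Gaussian part $F^0$ these are standard tail estimates for GAFs (sup bounds via the covariance \eqref{eq_gef}, anti-concentration for $(F^0(\zeta),\bar\partial^*F^0(\zeta))$, uniform lower bounds through a net argument over $\domainplusdelta$); the deterministic part contributes only through \eqref{eq_cf}, and the factor $\exp(\conf^2/(8\sigma^2))$ in \eqref{eq_suc_pro} is exactly the Gaussian density shift (Cameron–Martin / Girsanov cost) of translating the mean of $F^0$ by $F^1/\sigma$. I expect the quantitative anti-concentration — getting that with failure probability only $O(\delta^4\log^2(1/\delta))$ \emph{uniformly} over all zeros in a domain of area $O(L^2)$ — to be the main obstacle, since one must simultaneously rule out near-degenerate zeros and near-coincident pairs of zeros at scale $\delta$.

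Granting the good event, the proof of the two bullet points is deterministic. For the selection step, fix a true zero $\zeta$ and let $\gp$ be the grid point closest to $\zeta$ (so $|\gp-\zeta|_\infty \le \delta/2$). Using the quadratic lower bound near $\zeta$, one checks that for every $\gpp$ with $|\gp-\gpp|_\infty = 2\delta$ the point $\gpp$ is far enough from $\zeta$ that $e^{-\frac12|\gpp|^2}|F(\gpp)|$ dominates $e^{-\frac12|\gp|^2}|F(\gp)|$ by at least the margin $\eta_\gp$: the right side is $O(\delta^2)$ from the Taylor expansion plus the explicit $\eta_\gp \lesssim \delta$ term, while the left side is $\gtrsim \delta^2 \cdot(\text{Hessian})$ times a constant $>1$ once the $2\delta$ gap is used — this is precisely why the test compares against the box of half-side $2\delta$ rather than immediate neighbors. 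Hence $\gp$ is selected, and $|\gp-\zeta|_\infty \le \delta/2 < 2\delta$. Conversely, if a grid point $\gp$ is selected and lies in $\domainminus$, one argues that the neighborhood of $\gp$ must contain a zero: were $|F|e^{-\frac12|\cdot|^2}$ bounded below on the $2\delta$-box around $\gp$, the test inequality \eqref{eq_test} together with the upper bounds on the covariant derivative would be violated (a discrete version of "no nonzero local minima"); so there is $\zeta \in \{F=0\}$ with $|\zeta-\gp|_\infty \le 2\delta$, and since $\gp \in \domainminus$ this $\zeta$ lies in $\domain$.

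It remains to assemble the map $\map$ and handle the sieving step. Define $\map(\zeta)$ to be the grid point nearest $\zeta$; by the above it lies in $\outseta$ and satisfies \eqref{eq_dist}. Injectivity follows from the good-event separation of zeros: distinct zeros are more than, say, $10\delta$ apart (another anti-concentration bound, folded into the failure probability), so their nearest grid points differ — and, more importantly, after the $5\delta$-sieving each cluster of selected points around a single zero collapses to exactly one representative, which we then take as the value of $\map$. The maximality of the $5\delta$-separated set $\outset$ ensures no selected point near a zero is discarded without a neighbor within $5\delta$ surviving, so every zero still has a representative in $\outset$ within $2\delta + 5\delta$; re-centering the crude bound (and using that genuine zeros are well separated, so the clusters do not merge) restores $|\map(\zeta)-\zeta|_\infty \le 2\delta$. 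Finally, for the second bullet: a point $\gp \in \outset \cap \domainminus$ was selected, hence by the converse direction above lies within $2\delta$ of some $\zeta \in \{F=0\}\cap\domain$; the sieving only removes points, and one checks $\gp$ must be the representative $\map(\zeta)$ of that $\zeta$ (any other selected point near $\zeta$ is within $< 5\delta$ of $\gp$ and so cannot also be in $\outset$). This closes the argument modulo the probabilistic lemmas, whose proof — the quantitative uniform anti-concentration for GAFs — is the technical heart.
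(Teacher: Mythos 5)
Your plan reproduces the paper's architecture: a high-probability good event combining a linearization estimate (Lemma~\ref{lemma_lin}) with an ``almost multiple zero'' exclusion (Lemma~\ref{lemma_mult}), then a deterministic argument that transfers the discrete test \eqref{eq_test} to the weighted minimum principle (Lemma~\ref{lemma_min}) to locate a genuine zero in the comparison box, and finally a bookkeeping step through the sieve. Two places where your sketch is looser than what is actually needed.

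First, you flag ``quantitative anti-concentration uniformly over all zeros'' as the technical heart, apparently intending to rule out degenerate and near-coincident zeros directly. The paper never does anti-concentration at the random zero locations: Lemma~\ref{lemma_mult} is an anti-concentration bound for the pair $\bigl(F_\gp(0), F_\gp'(0)\bigr)$ at the \emph{deterministic} grid points $\gp \in \gridlplus$, where a plain union bound over $O(L^2\delta^{-2})$ points, with $\alpha\asymp\delta^2\sqrt{\logi}$ and $\beta\asymp\delta\sqrt{\logi}$, yields exactly the $L^2\,\log^2(1/\delta)\,\delta^4$ budget. Non-degeneracy and $7\delta$-separation of the actual zeros are then \emph{derived} afterward by transporting $F_\zeta(0)=0$ to the nearest lattice point via the linearization bounds \eqref{eq_aaaa0}--\eqref{eq_aaaa1}. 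If you attempted the anti-concentration at the (random) zeros you would face the conditioning issue you anticipate; working on the grid and pushing back by linearization is what dissolves it.

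Second, the assembly of $\map$ after sieving is not sound as stated. ``Every zero still has a representative within $2\delta+5\delta$; re-centering the crude bound restores $2\delta$'' is not a deduction: a $7\delta$ bound does not re-center to $2\delta$. What actually gives $2\delta$ is a contradiction with the maximality of $\outset$: if no $\gp\in\outset$ lay within $2\delta$ of a zero $\zeta\in\domain$, take the point $\gpp\in\outseta$ within $\delta/2$ of $\zeta$ (guaranteed by the selection-step analysis). Any $\gp\in\outset$ sits within $2\delta$ of some zero $\zeta'\neq\zeta$, so by the $7\delta$ separation of zeros $|\gpp-\gp|_\infty \ge 7\delta - \tfrac{\delta}{2} - 2\delta > 4\delta$; since grid distances are integer multiples of $\delta$ this forces $|\gpp-\gp|_\infty\ge 5\delta$, so $\outset\cup\{\gpp\}$ would still be $5\delta$-separated, contradicting maximality. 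That step, and the $5\delta$-separation of $\outset$ giving uniqueness of the representative for the second bullet, are needed to close the argument.
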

A proof of Theorem \ref{mth} is presented in Section \ref{sec_proof}.
We remark some aspects of the result.

\begin{itemize}[itemindent=0cm, leftmargin=0.3cm, itemsep=0.3cm]
	\item The AMN algorithm does not require knowledge of the noise level $\sigma$ and is homogeneous in the sense that $F$ and $c F$, with $c \in \mathbb{C} \setminus \{0\}$ produce the same output.
	
	\item Within the estimated success probability, the computation is accurate \emph{up to a factor of the grid spacing}.
	
	\item The analysis concerns an \emph{arbitrary deterministic signal} impacted by noise and is uniform over the class \eqref{eq_cf}. As usual in such \emph{smoothed analysis}, the success probability grows as the signal to noise ratio $\conf/\sigma$ decreases,
	because randomness helps preclude the very untypical features that could cause the algorithm to fail~\cite{spielman2009smoothed}. In fact, in the noiseless limit $\sigma=0$, the algorithm could completely fail, since $F^1$ can be freely prescribed on any finite subset of the plane \cite{MR1173118}. For example, irrespectively of its values on the acquisition grid, the deterministic function $F^1$ could have a cluster of zeros
	of small diameter that would trigger a single positive minimality test. The proof of Theorem \ref{mth} shows  that such examples are fragile, as the addition of even a moderate amount of noise regularizes the geometry of the zero set.
	
	\item Up to a small boundary effect, the guarantees in Theorem \ref{mth} comprise an estimate on the Wasserstein distance between the atomic measures supported on $\{F=0\} \cap \domain$ and on the computed set $\outset$.  More precisely, for a \emph{tolerance} $\tol >0$ let us define the \emph{boundary-corrected Wasserstein pseudo-distance} between two sets $U,V \subseteq \mathbb{C}$ as
	\begin{align*}
	\was_{L,\tol}(U,V) = \inf_\Phi \max_{z \in U} | \Phi(z)-z|_\infty,
	\end{align*}
	where the infimum is taken over all injective maps $\Phi\colon U \to V$ such that $V \cap \domaintol \subseteq \Phi(U)$. (The definition is not symmetric in $U$ and $V$, but this is not important for our purpose.) Then Theorem \ref{mth} reads
	\begin{align*}
	\mathbb{P} \Big[ \was_{L,2\delta}\left(\{F=0\} \cap \domain,\outset\right) > 2\delta \Big] \leq C L^2 \exp\bigg(\frac{\conf^2}{8\sigma^2}\bigg)
\max\left\{1,\log^2(1/\delta)\right\}
\delta^{4}.
	\end{align*}
	
	\item The presented analysis concerns a signal contaminated with \emph{complex} white noise. This is a mathematical simplification; we believe that with more technical arguments a similar result can be derived for \emph{real} white noise. The case of colored noise seems more challenging and will be the object of future work.
\end{itemize}

\subsection{Numerical experiments}

In Section \ref{sec_nums}, we report on numerical experiments that compare the AMN and MGN algorithms. We also include a modified version of thresholding (ST), that uses a thereshold proportional to the grid spacing and incorporates a sieving step as in AMN (while standard thresholding without sieving performs extremely poorly, as seen in Figure \ref{fig_threshold}).

The performance of AMN, MGN, and ST is first tested indirectly, by using these algorithms to simulate the zero sets of the random functions in the input model \eqref{eq_rf}. We then compare theoretically derived statistics of the zeros of \eqref{eq_rf} to empirical statistics obtained from the output of AMN and MGN under various simulated realizations of \eqref{eq_rf}.

Second, we perform a consistency experiment that aims at estimating the probability of computing a low-distortion parametrization of the zero set of $F$, as in Theorem \ref{mth}. Specifically, we simulate a realization of the random input $F$ sampled at high-resolution and use the output of AMN or MGN as a proxy for the ground truth $\{F=0\}$. We then test the extent to which this set is captured by the output of AMN, MGN, or ST from lower resolution subsets of the same simulated data.

The performance of AMN and MGN is almost identical, although the minimal resolution at which MGN starts to perform well is slightly lower than that for AMN. (This is to be expected, as the constants $2$ and $5$ used in \eqref{eq_test} and \eqref{eq_7s} are not adequate for low resolutions, cf.\ Remark \ref{rem_cons}.) Both AMN and MGN significantly outperform ST. See also
Figure \ref{fig_comparison_methods_nds} for an illustration.

\begin{figure}[tbp]
	\begin{minipage}{\textwidth}
		\begin{center}
			\begin{tikzpicture} 
				\draw (-0.5,-0.3) rectangle (5,0.3); 
				\draw (-0.2,0) circle (3.5pt); 
				\draw (1.5,0) node{AMN and MGN}; 
				\draw (4,0) node[cross=3.5pt]{};
				\draw (4.5,0) node{ST}; 
			\end{tikzpicture}
		\end{center}
	\end{minipage}	
	\begin{minipage}{\textwidth}
		\centering
		\begin{subfigure}[t]{0.49\textwidth}
			\centering
			\includegraphics[width=\textwidth,trim={1.5cm 0 3cm 0},clip]{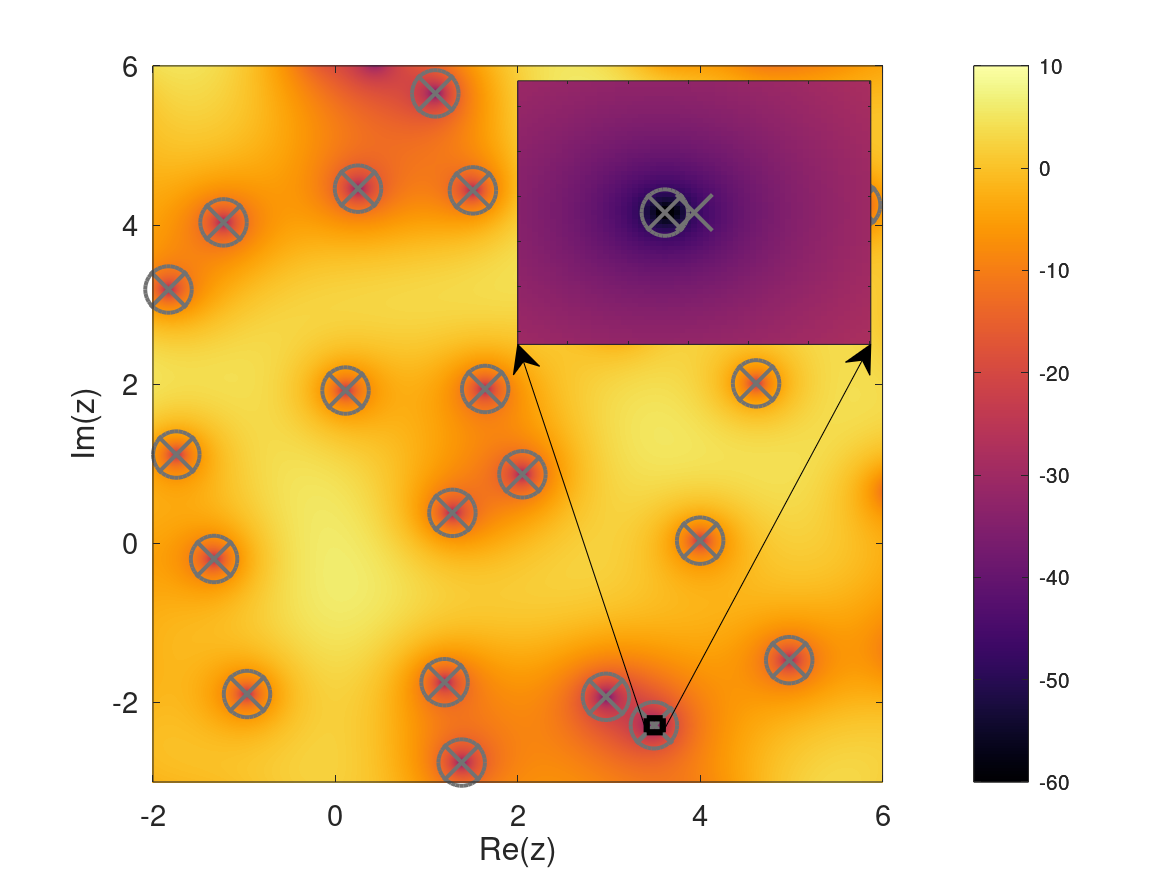}
			\caption{$f^1(t)=\exp(-t^2)$, $\conf=1$}    
			\label{fig_gauss_100_7_6_A_1}
		\end{subfigure}
		\hfill
		\begin{subfigure}[t]{0.49\textwidth}   
			\centering 
			\includegraphics[width=\textwidth,trim={1.5cm 0 3cm 0},clip]{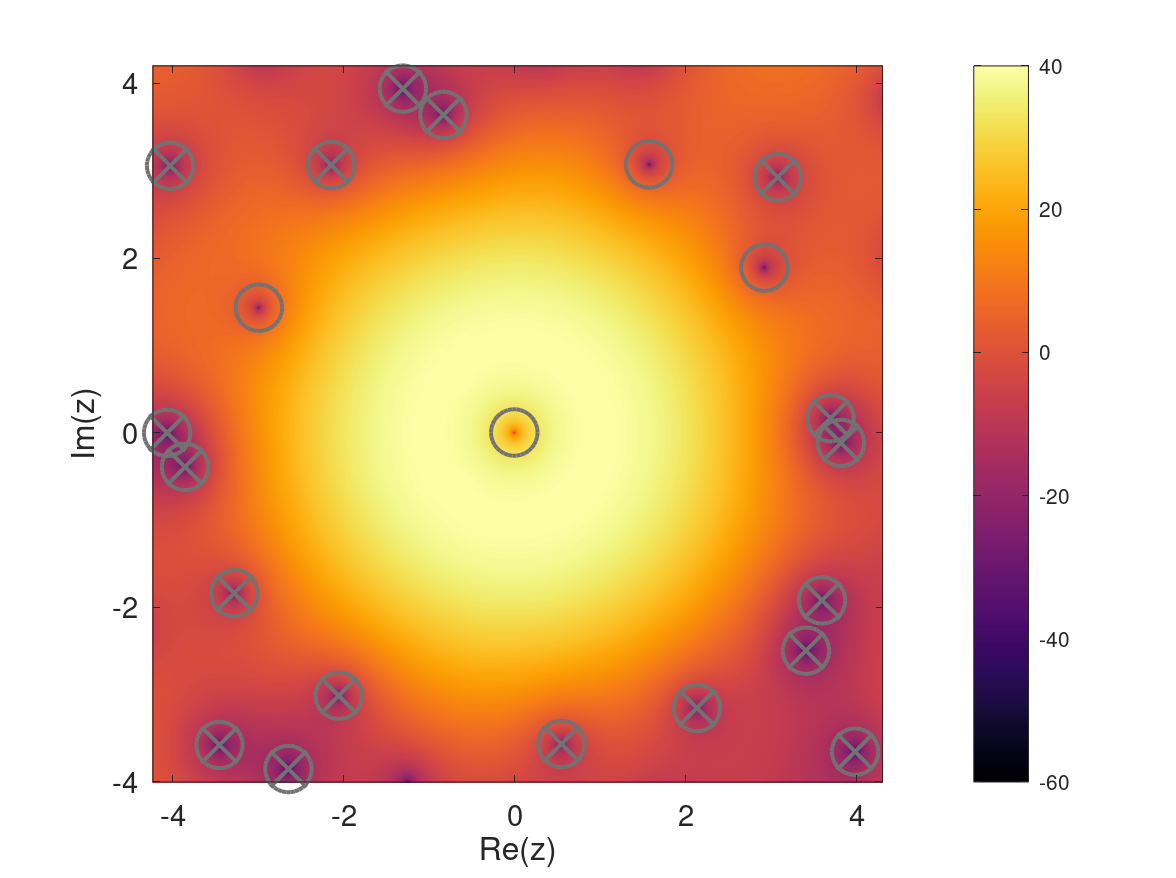}
			\caption{$f^1(t)=t \exp(-t^2)$, $\conf=100$}    
			\label{fig_herm1_100_7_6_A_100}
		\end{subfigure}
	\end{minipage}
	\caption{\small A realization of $e^{-\frac12 |z|^2} |(F^0(z) + F^1(z))|$ with $F^1$ the Bargmann transform of $f^1$. The deterministic functions are scaled to obtain the prescribed $\conf$. Zeros computed with AMN, MGN, and ST are calculated from grid samples with $\delta=2^{-9}$. Zeros from AMN and MGN coincide (circle), while ST (cross) fails either by detecting false zeros (left)
		or by not capturing all of them (right).
	}%
	\label{fig_comparison_methods_nds}
\end{figure}

The favorable performance of AMN is interesting also when the input is just noise, as it gives a fast and provably accurate method to simulate the zeros of the Gaussian entire function \eqref{eq_gef}. (The simulations that we present 
in Section~\ref{sec_nums} use certain heuristic shortcuts to accelerate the simulation of the input \eqref{eq_gef} --- see Section~\ref{sec_sim}; although we do not formally analyze these, they are implicitly validated, as the simulated point process reproduces the expected theoretical statistics.)

All numerical experiments can be reproduced with openly-accessible software and our code is available at \url{https://github.com/laescudero/discretezeros}.
Our implementation of the Bargmann transform uses \cite{basw91}.

\subsection{Organization} Section \ref{sec_pre} introduces the notation and basic technical tools about analytic functions, Bargmann-Fock shifts, and their applications to random functions and their zeros. Theorem \ref{mth} is proved in Section \ref{sec_proof}, while numerical experiments are presented in detail in Section \ref{sec_nums}. Conclusions and outlook on future directions are discussed in Section \ref{sec_con}.

\section{Preliminaries}\label{sec_pre}
\subsection{Notation}\label{sec_not}
For a complex number $z=x+iy$, we use the notation $|z|_\infty = \max\{|x|,|y|\}$, while $|z|$ denotes the usual absolute value. The zero set of $F$ is denoted by $\{F=0\}$.
The differential of the (Lebesgue) area measure on the plane will be denoted for short $\dm$, while the measure of a set $E$ is $|E|$.
With a slight abuse of notation, we also denote the cardinality of a finite set $Z$ by $\card{Z}$.
Squares on the complex plane are denoted by $Q_r(z) = \{w \in \mathbb{C}: |z-w|_\infty \leq r\}$. 
For two non-negative functions $f,g$, we write
$f \lesssim g$ if there exists an \emph{absolute constant} $C$ such that $f(x) \leq C g(x)$, for all $x$. We write $f \asymp g$ if $f \lesssim g$ and $g \lesssim f$.

The Wirtinger derivative of a function
$F\colon \mathbb{C} \to \mathbb{C}$ is $\partial F=\frac12(\frac{d}{dx} F-i \frac{d}{dy} F)$. When we need to stress on which variable the derivative is taken we write subindices, e.g., $\partial_w F(z,w)$.

A Gaussian entire function (see \cite[Ch.~2]{gafbook} and \cite{MR2643444}) is a random function $F\colon \mathbb{C} \to \mathbb{C}$ that is almost surely entire, and such that for every $z_1, \ldots, z_n \in \mathbb{C}$, $\big(F(z_1), \ldots, F(z_n)\big)$ is a circularly symmetric complex normal vector. 
We will be only concerned with the random function $F$ given in \eqref{eq_rf}. 
We also use the notation \eqref{eq_grid}, \eqref{eq_domain}, \eqref{eq_gridl}, possibly for distinct values of $L$.

\subsection{Bargmann-Fock shifts and stationarity of amplitudes}\label{sec_fock}
The analysis of the AMN algorithm is more transparent when formulated in terms of the \emph{Bargmann-Fock shifts}. For a function $F\colon \mathbb{C} \to \mathbb{C}$ we let
\begin{align}\label{eq_fs}
F_w(z) = e^{-\frac{1}{2} |w|^2 - z \overline{w}} \, F(z+w).
\end{align}
The \emph{amplitude} of an entire function $F$ is defined as the weighted magnitude
\begin{align}\label{eq_G}
G(z) = e^{-\frac12 |z|^2} |F(z)|,
\end{align}
and satisfies 
\begin{align}\label{eq_sat}
G_w(z) :=
G(z+w)  = e^{-\frac12 |z|^2} |F_w(z)|.
\end{align}
The comparison margin of the AMN algorithm \eqref{eq_th} can be expressed in terms of Bargmann-Fock shifts as
\begin{align}\label{eq_thb}
\eta_\gp 
= 
\max\big\{
\abs{F_\gp(0)}, 
\tfrac{3}{4}
\big|
 F_\gp(\delta) - F_\gp(0)
\big|
\big\},
\end{align}
and leads to the approximation \eqref{eq_intro_approx} because
\begin{align*}
|F'_\gp(0)| = e^{-\frac12 |\gp|^2} \abs{\bar{\partial}^* F(\lambda)}.
\end{align*}
(Here and throughout we write $F'_\gp(0)$ for $\partial[F_\gp](0)$.)
Similarly, in terms of amplitudes, the test \eqref{eq_test} reads
\begin{align}\label{eq_testb}
G(\gpp)=
e^{-\frac12|\gpp-\gp|^2}|F_\gp(\gpp-\gp)|  \geq |F_\gp(0)| + \eta_\gp
= G(\gp) + \eta_\gp,
\quad
\gpp \in \grid, |\gpp-\gp|_\infty = 2 \delta.
\end{align}

With respect to the input model \eqref{eq_rf} we note that, if $F^0$ is the (zero mean) Gaussian entire function with correlation kernel \eqref{eq_gef}, then the Bargmann-Fock shifts $F^0 \mapsto F^0_w$ preserve the stochastics of $F^0$, as they leave its covariance kernel invariant. As a consequence, for any $w \in \mathbb{C}$,
$F^0_w(0), \big[F^{0}_{w}\big]'(0)$ are independent standard complex normal random variables (with zero mean and variance $1$). Indeed, by the mentioned invariance it suffices to consider $w=0$, and, in this case, $F^0_w(0), \big[F^{0}_{w}\big]'(0)$ are the coefficients $\xi_0$ and $\xi_1$ in \eqref{eq_alt_gef}.

\subsection{Minimum principle for amplitudes}
The following weight\-ed version of the minimum principle is at the core of the success of MGN and AMN.
\begin{lemma}\label{lemma_min}
	Let $F\colon  \mathbb{C} \to \mathbb{C}$ be entire, $r>0$, and assume that
	\begin{align}\label{eq_testa}
	|F(0)| \leq |F(z)| e^{-\frac12 |z|^2},
	\quad
	\mbox{for all } z \in \mathbb{C} \mbox{ such that } |z|_\infty = r.
	\end{align}
	Then there exists $z \in \mathbb{C}$ with $|z|_\infty \leq r$ such that $F(z)=0$.
\end{lemma}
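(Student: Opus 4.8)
The plan is to argue by contradiction, using the fact that $\log\bigl(e^{-\frac12|z|^2}|F(z)|\bigr)$ is superharmonic away from the zeros of $F$; this makes the weighted amplitude $G$ unable to have a nonzero local minimum, which is exactly the heuristic invoked in the introduction for the success of MGN and AMN. First I would dispose of the degenerate cases: if $F\equiv 0$ any $z$ works, and if $F(0)=0$ then $z=0$ already satisfies the conclusion since $|0|_\infty=0\le r$. So from now on assume $F\not\equiv 0$ and $F(0)\ne 0$, and suppose for contradiction that $F(z)\ne 0$ for every $z$ with $|z|_\infty\le r$. Since the zero set of the nonzero entire function $F$ is discrete and closed and disjoint from the compact square $Q_r(0)$, there is some $\varepsilon>0$ such that $F$ does not vanish on the larger square $Q_{r+\varepsilon}(0)$.

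On $Q_{r+\varepsilon}(0)$ the function $\log|F|$ is harmonic, being locally the real part of a holomorphic branch of $\log F$. Hence $u(z):=\log|F(z)|-\tfrac12|z|^2=\log G(z)$ is $C^\infty$ on $Q_{r+\varepsilon}(0)$ and satisfies $\Delta u=-\Delta\bigl(\tfrac12|z|^2\bigr)=-2<0$; in particular $u$ is (strictly) superharmonic there. Taking logarithms in the hypothesis \eqref{eq_testa} rewrites it as $u(0)=\log|F(0)|\le u(z)$ for all $z$ with $|z|_\infty=r$, that is, $u(0)\le\min_{\partial Q_r(0)}u$.

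Next I would invoke the minimum principle for superharmonic functions on $Q_r(0)$, whose closure lies inside the domain of superharmonicity $Q_{r+\varepsilon}(0)$: this gives $\min_{\overline{Q_r(0)}}u=\min_{\partial Q_r(0)}u\ge u(0)$, so $u$ attains its minimum over $\overline{Q_r(0)}$ at the interior point $0$. But at an interior local minimum of a $C^2$ function one has $\Delta u\ge 0$ (each pure second partial derivative is nonnegative), which contradicts $\Delta u\equiv -2$. This contradiction shows that $F$ must vanish at some point of $Q_r(0)=\{|z|_\infty\le r\}$, proving the lemma. Equivalently, one can run the same argument with the subharmonic function $v(z)=\tfrac12|z|^2-\log|F(z)|$, which has $\Delta v\equiv 2>0$, together with the classical maximum principle.

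I do not expect a genuine obstacle here; the only points needing care are (i) the reduction to $F(0)\ne 0$, $F\not\equiv 0$; (ii) passing from ``$F\ne 0$ on the closed square'' to ``$F\ne 0$ on a slightly larger square'', so that $u$ is smooth on a neighborhood of $\overline{Q_r(0)}$ and the minimum principle genuinely applies; and (iii) using that $\Delta u$ is \emph{strictly} negative, not merely nonpositive, to rule out the interior minimum (equivalently: a non-constant superharmonic function has no interior minimum, and $u$ cannot be constant since $\Delta u\ne 0$). Everything else is the standard minimum principle, and the statement is just the rigorous counterpart of the analyticity-based heuristic used for MGN and AMN.
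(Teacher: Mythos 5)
Your proof is correct and follows the same overall route as the paper's: assume $F$ is zero-free on the closed square, note that $\log G(z)=\log|F(z)|-\tfrac12|z|^2$ has constant nonzero Laplacian (since $\Delta\log|F|=0$ for non-vanishing analytic $F$ and $\Delta\tfrac12|z|^2=2$), combine the hypothesis with the maximum/minimum principle, and derive a contradiction. The one genuine difference is the final step: the paper invokes the strong maximum principle to conclude that $G$ is \emph{constant} on the square, then differentiates $|F(z)|^2e^{-|z|^2}$ via $\partial_z$ followed by $\partial_{\bar z}$ to reach $F\equiv 0$; you instead go directly from ``$u$ attains an interior minimum at $0$'' to the second-derivative test $\Delta u(0)\ge 0$, which contradicts $\Delta u\equiv -2$ without ever passing through constancy. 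Your finish is slightly more elementary and avoids the $\partial_z,\partial_{\bar z}$ computation, but the two are essentially interchangeable, and your care in enlarging to $Q_{r+\varepsilon}(0)$ so that $u$ is $C^2$ up to the closed square is exactly the right thing to do.
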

\begin{proof}
	Let $D := \{ z \in \mathbb{C}: |z|_\infty < r\}$ and suppose that $F$ does not vanish on  $\bar{D}$. Then
	the function
	\begin{align*}
	H(z) = \frac{e^{\frac12 |z|^2}}{|F(z)|}
	\end{align*}
	is well defined on $\bar{D}$ and satisfies
	\begin{align}\label{eq_maxH}
	\log H (0) \geq \log H (z), \qquad z \in \partial D.
	\end{align}
	By the analyticity of $F$, $\Delta \log|F| = 0$ and thus
	\begin{align*}
	\Delta \big[ \log H (z)\big] = \Delta \Big[
	\tfrac{|z|^2}{2} - \log|F(z)|
	\Big] = 2.
	\end{align*}
	Hence, the maximum principle for subharmonic functions together with \eqref{eq_maxH} implies that $\log H(z)$ and therefore $|F(z)| e^{-\frac12 |z|^2}$ is constant on $D$.
	For $z \in D$, we compute
	\begin{align*}
	0 &= \partial_z \big[|F(z)|^2 e^{-|z|^2}\big] = 
	\partial_z \big[F(z) \,e^{-|z|^2} \big]  \overline{F(z)}
	\\&=\big[\partial_z F(z)-\bar{z}F(z)\big] \overline{F(z)} \,e^{-|z|^2}.
	\end{align*}
	As $F$ is non-vanishing on $D$, it follows that $\partial_z F-\bar{z}F = 0$ on $D$, and therefore
	\begin{align*}
	0 = \partial_{\bar{z}} [\partial_z F-\bar{z}F]
	= - F,
	\end{align*}
	on $D$. This contradiction shows that $F$ must vanish on $\bar{D}$.
\end{proof}

\subsection{Linearization}
In what follows, we derive basic facts about the input model \eqref{eq_rf}, and always assume that \eqref{eq_gef} and \eqref{eq_cf} hold.

The following is a strengthened version of \cite[Lemma 2.4.4]{gafbook}.
\begin{lemma}\label{lemma_lin}
	Let $F$ be as in \eqref{eq_rf}. Then there exists an absolute constant $C>0$ such that for all $L \geq 1$ and $t\geq\conf$,
	\begin{align*}
	&\mathbb{P} \left[ \sup_{w \in \domain, |z| \leq 10} |z|^{-2}
	\big| F_w(z) - \big(F_w(0) + F_w'(0) z \big) \big| > t \right] \leq CL^2 e^{-(t-\conf)^2/(8\noise^2)},
	\\
	&\mathbb{P} \left[ \sup_{w \in \domain, |z| \leq 10}
	|z|^{-2} \big| F_w(z)e^{-\frac12 |z|^2} - \big(F_w(0) + F_w'(0) z \big) \big|
	> t \right] \leq CL^2 e^{-(t-\conf)^2/(8\noise^2)}.
	\end{align*}
\end{lemma}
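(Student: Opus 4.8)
The plan is to reduce the claim to a quantitative Taylor-remainder bound combined with a maximal inequality for the Gaussian part. Write $F_w(z) = F_w(0) + F_w'(0)z + R_w(z)$, where $R_w(z) = \sum_{n\geq 2} \frac{[F_w]^{(n)}(0)}{n!}z^n$ is the second-order Taylor remainder. By Cauchy's estimates on a disk of radius $\rho$ (to be chosen, say $\rho = 20$ or similar, comfortably larger than the radius $10$ appearing in the statement), one has $|z|^{-2}|R_w(z)| \lesssim \rho^{-2}\sup_{|\zeta| = \rho}|F_w(\zeta)| \cdot \sum_{n\geq 2}(|z|/\rho)^{n-2} \lesssim \sup_{|\zeta|=\rho}|F_w(\zeta)|$ uniformly for $|z|\leq 10$, for a suitable absolute constant; more carefully, the standard argument (as in \cite[Lemma 2.4.4]{gafbook}) gives $|z|^{-2}|R_w(z)| \leq C \sup_{|\zeta| = \rho}|F_w(\zeta)|e^{-\frac12|\zeta|^2}$, i.e., it is controlled by a constant times the supremum of the \emph{amplitude} of the shifted function $F_w$ over a fixed circle. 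Since $G_w(\zeta) = G(w+\zeta)$ by \eqref{eq_sat}, this supremum is itself bounded by $\sup_{u \in \domainf_{L+\rho}} G(u)$.

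Thus both statements follow once we show that, with the stated probability, $\sup_{u \in \domainf_{L+\rho}} e^{-\frac12|u|^2}|F(u)| \leq c\,(t-\conf)$ for an appropriate absolute constant $c$ absorbing the Cauchy/remainder constants. Here the split \eqref{eq_rf} is used: $e^{-\frac12|u|^2}|F^1(u)| \leq \conf$ by \eqref{eq_cf}, so it suffices to bound $\noise\cdot\sup_u e^{-\frac12|u|^2}|F^0(u)|$. For the Gaussian entire function $F^0$, the quantity $e^{-\frac12|u|^2}|F^0(u)|$ is, at each fixed $u$, the modulus of a standard complex Gaussian (by stationarity of amplitudes, Section \ref{sec_fock}), hence subgaussian; a union bound over an $O(1)$-net of $\domainf_{L+\rho}$ together with a Gaussian tail estimate and a standard chaining/oscillation control (using Cauchy estimates again to bound the modulus of continuity of $F^0$ on the net cells) yields $\mathbb{P}[\sup_{u\in\domainf_{L+\rho}} e^{-\frac12|u|^2}|F^0(u)| > s] \leq C L^2 e^{-s^2/(8\noise^2)\cdot(1+o(1))}$; choosing the net and constants so that the exponent is exactly $-s^2/(8\noise^2)$ with $s = t-\conf$ gives the claimed bound. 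The second displayed inequality is handled identically, the only difference being the extra factor $e^{-\frac12|z|^2}$ on $F_w(z)$, which for $|z|\leq 10$ is an $O(1)$ multiplicative nuisance and in fact makes the quantity smaller; one re-runs the same Cauchy-remainder computation on $F_w(z)e^{-\frac12|z|^2} - (F_w(0)+F_w'(0)z)$, noting $e^{-\frac12|z|^2} = 1 - \frac12|z|^2 + \cdots$ contributes only to the (already controlled) second-and-higher-order part.

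The routine parts are the Cauchy estimates and the bookkeeping of absolute constants; I would not belabor these. The main obstacle — and the place where care is needed to get the clean constant $8\noise^2$ in the exponent and the clean power $L^2$ in front — is the maximal inequality for $\sup e^{-\frac12|u|^2}|F^0(u)|$ over the square of side $\asymp L$: one must choose the discretization scale and handle the continuity correction so that neither the net cardinality (which is $\asymp (L/\text{mesh})^2$) nor the oscillation term degrades the Gaussian exponent. The standard device is to take a net of mesh $\epsilon$ small but fixed, bound the oscillation of $F^0$ on each cell by Cauchy's inequality in terms of $\sup_{\text{slightly larger disk}}|F^0|$, and then absorb the resulting slack by noting that the threshold $t \geq \conf$ can be assumed large (for small $t-\conf$ the probability bound exceeds $1$ and is vacuous, after adjusting $C$), so that the quadratic exponent dominates any polynomial-in-$L$ or constant losses. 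This is exactly the mechanism by which \cite[Lemma 2.4.4]{gafbook} is proved, and the present Lemma \ref{lemma_lin} is its uniform-in-$w$, two-variant strengthening obtained by carrying the supremum over $w \in \domainf$ inside before applying the tail bound.
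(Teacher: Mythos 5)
Your proposal follows essentially the same route as the paper: Taylor-expand $F_w$, bound the second-order remainder (and the nuisance factor $e^{-\frac12|z|^2}-1$) by a local supremum of the amplitude $G$ via the Fock-shift identity $G_w(z)=G(z+w)$, reduce via $|G|\le\conf+\noise\,|G^0|$ to an excursion bound for $G^0$ over a square of side $\asymp L$, and obtain $CL^2e^{-s^2/8}$ by a covering argument resting on \cite[Lemma 2.4.4]{gafbook}. The only cosmetic differences are that you use a Cauchy sup-estimate on a circle where the paper uses a mean-value area integral (interchangeable for entire functions), and that you sketch the excursion bound via a net-plus-oscillation argument where the paper simply tiles $\Omega_{L+12}$ by $O(L^2)$ small squares and applies the cited lemma on each; also, for the weighted variant note that $F_w(z)e^{-\frac12|z|^2}$ is not holomorphic so one cannot literally re-run Cauchy on it — the clean move is to add and subtract $F_w(z)$ and bound $|F_w(z)|\,|1-e^{-\frac12|z|^2}|\lesssim |z|^2\sup G_w$, which is what your parenthetical gestures at and what the paper does explicitly.
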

\begin{proof}	
	We consider the Taylor expansion of $F$:
	\begin{align*}
	F(z) = F(0) + F'(0) z + E_2(z) z^2,
	\end{align*}
	where $E_2$ can be bounded in terms of the amplitude \eqref{eq_G} as
  	\begin{equation*}
	\sup_{|z| \leq 10} |E_2(z)| 
  \lesssim \int_{|\zeta| \leq 12} |F(\zeta)| \,\dm(\zeta) 
  \asymp \int_{|\zeta| \leq 12} |G(\zeta)| \,\dm(\zeta).
	\end{equation*}
	We also note that for $|z| \leq 10$, 
	\begin{align*}
	|F(z) - F(z) e^{-\frac12 |z|^2}| &= |F(z)| \big| 1-e^{-\frac12 |z|^2} \big|
	\\
	&\lesssim
	|z|^2 \int_{|\zeta| \leq 12} |F(\zeta)| \,\dm(\zeta) 
	\asymp |z|^2 \int_{|\zeta| \leq 12} |G(\zeta)| \,\dm(\zeta).
	\end{align*}
	Hence, for $|z| \leq 10$,
	\begin{align*}
	\big| F(z) - \big(F(0) + F'(0) z \big) \big| \lesssim
	|z|^2 \int_{|\zeta| \leq 12} |G(\zeta)| \,\dm(\zeta),
	\\
	\big| F(z)e^{-\frac12 |z|^2} - \big(F(0) + F'(0) z \big) \big| \lesssim
	|z|^2 \int_{|\zeta| \leq 12} |G(\zeta)| \,\dm(\zeta).
	\end{align*}
	
	We apply the previous bounds to $F_w$, note that,
	by \eqref{eq_sat}, $G_w(z) = G(z+w)$, and obtain that for $|z| \leq 10$,
	\begin{align*}
	A_w(z) &:= |z|^{-2}
	\big| F_w(z) - \big(F_w(0) + F_w'(0) z \big) \big| \lesssim
	\int_{|\zeta-w| \leq 12} |G(\zeta)| \,\dm(\zeta),
	\\
	B_w(z) &:= |z|^{-2} \big| F_w(z)e^{-\frac12 |z|^2} - \big(F_w(0) + F_w'(0) z \big) \big| \lesssim
	\int_{|\zeta-w| \leq 12} |G(\zeta)| \,\dm(\zeta).
	\end{align*}
	Hence
	\begin{align*}
	\sup_{|z| \leq 10, w \in \domain} A_w(z) + B_w(z)
	\lesssim \sup_{w\in \domain} \int_{|\zeta-w| \leq 12} |G(\zeta)| \,\dm(\zeta)
	\lesssim \sup_{|\zeta| \leq L+12} |G(\zeta)|.
	\end{align*}
	Let $G^0$ and $G^1$ be the amplitudes corresponding to $F^0$ and $F^1$, respectively. Then by \eqref{eq_cf},
	\begin{align*}
	|G(\zeta)| \leq \noise \cdot \abs{G^0(\zeta)} + |G^1(\zeta)| \leq \conf + \noise \cdot |G^0(\zeta)|,
	\qquad \zeta \in \mathbb{C}.
	\end{align*}
	Hence,
	\begin{equation*}
	\mathbb{P} \Big[ \sup_{|\zeta| \leq L} |G(\zeta)| > t \Big] \leq \mathbb{P} \Big[ \sup_{|\zeta| \leq L} \abs{G^0(\zeta)} > \frac{t-\conf}{\noise} \Big].
	\end{equation*}
	To conclude, we claim that the following excursion bound holds:
	\begin{align*}
	\mathbb{P} \Big[ \sup_{|\zeta| \leq L} |G^0(\zeta)| > t \Big] \leq C L^2 e^{-t^2/8}, \qquad t \geq 0,
	\end{align*}
	where $C>0$ is an absolute constant. For $L \leq 1/4$ this follows for example from \cite[Lemma 2.4.4]{gafbook}. In general, we cover the domain with $\lesssim L^2$ squares of the form $w+[-1/4,1/4]^2$, apply the previously mentioned bound to $G^0_w(z)=G^0(z+w)$, and use a union bound. This completes the proof.
\end{proof}

\subsection{Almost multiple zeros}
It is easy to see that, almost surely, the random function \eqref{eq_rf} has no multiple zeros. 
In the analysis of the AMN algorithm, we will also need to control the occurrence of zeros that are multiple up to a certain numerical precision, in the sense that $F$ and its derivative are simultaneously small. The following lemma is a first step in that direction, as it controls the probability of finding a grid point that is an almost multiple zero.

\begin{lemma}\label{lemma_mult}
	Let $F$ be as in \eqref{eq_rf} and $\alpha, \beta>0$. Then the probability that for some grid point $\gp \in \gridl$ the following occurs:
	\begin{align}\label{eq_a}
	|F_\gp(0)| \leq \alpha, \text{ and }\ |F'_\gp(0)| \leq \beta
	\end{align}
	is at most $C L^2 \alpha^2 \beta^2 \delta^{-2} \noise^{-4}$, where $C$ is an absolute constant.
\end{lemma}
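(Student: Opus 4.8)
The plan is to estimate the probability of the bad event $\eqref{eq_a}$ at a single grid point $\gp$ and then apply a union bound over the $\asymp L^2/\delta^2$ points in $\gridl$. The key point is that, for the noise part $F^0$, the Bargmann--Fock shift $F^0 \mapsto F^0_\gp$ preserves the law, so $\big(F^0_\gp(0), [F^0_\gp]'(0)\big)$ is a pair of independent standard complex normal random variables, as recorded in Section~\ref{sec_fock}. Since $F_\gp = F^1_\gp + \noise \cdot F^0_\gp$ and the shift is linear, $\big(F_\gp(0), F'_\gp(0)\big)$ equals a deterministic shift of $\big(\noise F^0_\gp(0), \noise [F^0_\gp]'(0)\big)$. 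Thus $F_\gp(0)$ is a complex Gaussian with variance $\noise^2$ (and some mean depending on $F^1$), and similarly for $F'_\gp(0)$, and the two are independent.

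First I would reduce to the single-point estimate: the probability that a fixed complex Gaussian $X$ with variance $\noise^2$ and arbitrary mean satisfies $|X| \leq \alpha$ is at most the probability that a \emph{centered} complex Gaussian with the same variance lies in a disk of radius $\alpha$ (the centered case maximizes the mass of any disk of fixed radius, by a standard rearrangement/unimodality argument for the radially symmetric Gaussian density), which is $1 - e^{-\alpha^2/\noise^2} \leq \alpha^2/\noise^2$. Applying this to $F_\gp(0)$ with radius $\alpha$ and to $F'_\gp(0)$ with radius $\beta$, and using independence of the two, the probability of $\eqref{eq_a}$ at a fixed $\gp$ is at most $(\alpha^2/\noise^2)(\beta^2/\noise^2) = \alpha^2\beta^2\noise^{-4}$. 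Summing over the $\card{\gridl} \asymp L^2\delta^{-2}$ grid points in $\gridl$ gives the claimed bound $C L^2 \alpha^2 \beta^2 \delta^{-2} \noise^{-4}$.

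The only genuinely nonroutine point is the claim that adding a nonzero mean cannot increase the probability that a circularly symmetric complex Gaussian falls in a disk of fixed radius centered at the origin; this follows because the density $z \mapsto \frac{1}{\pi\noise^2} e^{-|z|^2/\noise^2}$ is a radially decreasing function, so by the Hardy--Littlewood-type inequality (or directly by conditioning on $|X|$ and integrating out the phase) the disk $\{|z|\leq \alpha\}$ captures at least as much mass of the centered law as $\{|z - m|\leq \alpha\}$ does, equivalently the centered law captures the most mass of any translate of the disk. Everything else is a direct computation and a union bound, so I do not expect a real obstacle; if one wanted to avoid the rearrangement argument entirely, one could instead bound $\mathbb{P}[|X|\leq\alpha] \leq \sup_{z_0} \int_{|z - z_0|\leq \alpha} \frac{1}{\pi\noise^2} e^{-|z|^2/\noise^2}\,\dm(z) \leq \frac{1}{\pi\noise^2}\cdot \pi\alpha^2 = \alpha^2/\noise^2$ by simply bounding the density by its maximum, which already suffices for the stated estimate.
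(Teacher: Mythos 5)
Your proof is correct and follows essentially the same route as the paper: independence of $F_\gp(0)$ and $F'_\gp(0)$, a small-ball probability estimate for each noncentered complex Gaussian, and a union bound over the $\asymp L^2\delta^{-2}$ grid points. The only cosmetic difference is that the paper invokes Anderson's lemma for the step that a nonzero mean cannot increase the probability of landing in a centered disk, whereas you derive this directly (and your fallback of simply bounding the Gaussian density by its supremum $\tfrac{1}{\pi\noise^2}$ is even cleaner and makes Anderson's lemma unnecessary).
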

\begin{proof}
	For each grid point $\gp \in \gridl$, $F_\gp(0)$ and $F'_\gp(0)$ are independent complex normal variables with possibly non-zero means $\mu_1, \mu_2$ and variance $\noise^2$.
	Therefore, 
	\begin{align}\label{eq_a1}
	P\big(|F_\gp(0)|\leq \alpha\big) = 
	\frac{1}{\pi \noise^2}
	\int_{\abs{\zeta}\leq \alpha} e^{-\frac{1}{\noise^2} \abs{\zeta-\mu_1}^2}\, \dm(\zeta),
	\\
	\label{eq_a2}
	P\big(|F'_\gp(0)|\leq \beta\big) = 
	\frac{1}{\pi \noise^2}
	\int_{\abs{\zeta}\leq \beta} e^{-\frac{1}{\noise^2} \abs{\zeta-\mu_2}^2}\, \dm(\zeta).
	\end{align}
	By Anderson's lemma \cite{MR69229}, the right-hand sides of \eqref{eq_a1} and \eqref{eq_a2} are maximal when $\mu_1=0$ and $\mu_2=0$, respectively. Direct computation in those cases yields
	$P(|F_\gp(0)|\leq \alpha) \lesssim \alpha^2 \noise^{-2}$ and 
	$P(|F'_\gp(0)|\leq \beta) \lesssim \beta^2 \noise^{-2}$. By independence, the probability of \eqref{eq_a} is $\lesssim \alpha^2 \beta^2 \noise^{-4}$. On the other hand, there are $\lesssim L^2 \delta^{-2}$ grid points under consideration, so the conclusion follows from the union bound.
\end{proof}

\subsection{First intensity of zeros}
\label{sec_fifornzmean}
The following proposition is not used in the proof of Theorem \ref{mth}, but rather as a benchmark in the numerical experiments (Section~\ref{sec_nums}).
\begin{prop}
	\label{prop_first}
	Let $F$ be as in \eqref{eq_rf}. Then for every Borel set $B \subset \mathbb{C}$,
	\begin{align*}
	\E[\card{\{z\in B: F(z)=0\}}] = \int_B \rho_1(\zeta) \,\dm(\zeta)
	\end{align*}
	where
	\begin{equation}\label{eq_rho1}
	\rho_1(\zeta) = \frac{1}{\pi} e^{-\frac{1}{\noise^2}\abs{F^1(\zeta)}^2 e^{-\abs{\zeta}^2}} \left(1+\frac{e^{-\abs{\zeta}^2}}{\noise^2} \left|\partial_\zeta {F^1}(\zeta)-\overline{\zeta} F^1(\zeta)\right|^2\right).
	\end{equation}
\end{prop}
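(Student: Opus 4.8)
The plan is to apply the Kac--Rice formula to the random field $F = F^1 + \noise\, F^0$, viewed under the identification $\mathbb{C}\cong\mathbb{R}^2$. Since $F^0$ is almost surely entire and $F^1$ is entire, $F$ is almost surely entire; since $\noise>0$, the Gaussian vector $(F(z),F'(z))$ is non-degenerate at every $z$, so (as already noted in Section~\ref{sec_fock}'s sequel) $F$ almost surely has only simple zeros and $\card{\{z \in B : F(z)=0\}}$ is almost surely finite for bounded $B$. The real Jacobian of $w \mapsto F(w)$ at $z$ is multiplication by $F'(z)$, whose determinant equals $\abs{F'(z)}^2$, and the Kac--Rice formula (see, e.g., \cite{gafbook} for the centered case, whose derivation extends to the present setting with a deterministic drift) gives
\begin{align*}
\E\big[\card{\{z \in B: F(z)=0\}}\big] = \int_B p_{F(z)}(0)\,\E\big[\abs{F'(z)}^2 \,\big|\, F(z)=0\big]\,\dm(z),
\end{align*}
where $p_{F(z)}$ is the density of the complex Gaussian variable $F(z)$. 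It remains to compute the integrand.

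Differentiating the kernel \eqref{eq_gef} in $z$ and in $\bar w$ and setting $w=z$, the pair $(F(z),F'(z))$ is jointly complex Gaussian, circularly symmetric about its mean $(F^1(z),\partial_\zeta F^1(z))$, with
\begin{align*}
\Var F(z) = \noise^2 e^{\abs{z}^2}, \quad \Cov\!\big(F'(z),F(z)\big) = \noise^2 \bar{z}\, e^{\abs{z}^2}, \quad \Var F'(z) = \noise^2 (1+\abs{z}^2) e^{\abs{z}^2}.
\end{align*}
In particular $F(z)$ is complex Gaussian with mean $F^1(z)$ and variance $\noise^2 e^{\abs{z}^2}$, so
\begin{align*}
p_{F(z)}(0) = \frac{e^{-\abs{z}^2}}{\pi \noise^2}\exp\!\Big(-\tfrac{1}{\noise^2}\abs{F^1(z)}^2 e^{-\abs{z}^2}\Big),
\end{align*}
which is precisely the exponential prefactor in \eqref{eq_rho1}.

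Next I would decorrelate $F'(z)$ from $F(z)$ by writing $F'(z) = \bar{z}\,F(z) + R(z)$, where $R(z) := F'(z) - \bar{z}\,F(z) = -\bar{\partial}^* F(z)$ is the covariant derivative \eqref{eq_covder}. A one-line computation with the covariances above gives $\Cov(R(z),F(z))=0$; since the pair is jointly circularly-symmetric complex Gaussian (so pseudo-covariances vanish automatically), $R(z)$ is independent of $F(z)$, and its law is complex Gaussian with mean $\partial_\zeta F^1(z) - \bar z F^1(z)$ and variance $\noise^2 e^{\abs{z}^2}$. Conditioning on $F(z)=0$ therefore leaves $F'(z)=R(z)$ with unchanged law, whence
\begin{align*}
\E\big[\abs{F'(z)}^2 \,\big|\, F(z)=0\big] = \E\,\abs{R(z)}^2 = \big|\partial_\zeta F^1(z) - \bar z F^1(z)\big|^2 + \noise^2 e^{\abs{z}^2}.
\end{align*}
Multiplying this by $p_{F(z)}(0)$ and pulling the factor $\noise^2 e^{\abs{z}^2}$ out of the bracket yields exactly $\rho_1(z)$ as in \eqref{eq_rho1}, which finishes the identification.

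The only genuinely delicate point is the rigorous invocation of the Kac--Rice formula: one must verify the non-degeneracy of $(F(z),F'(z))$ (which uses $\noise>0$) and enough local regularity/integrability for the formula to hold as an exact identity rather than merely formally, while keeping the complex-Gaussian bookkeeping straight (circular symmetry, the vanishing of pseudo-covariances, and the identification of the real Jacobian determinant of a holomorphic map with $\abs{F'}^2$). Everything else is routine algebra and differentiation of \eqref{eq_gef}.
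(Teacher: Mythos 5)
Your proof is correct and follows essentially the same route as the paper: both invoke the Kac--Rice formula, identify the real Jacobian determinant of the holomorphic map with $\abs{F'(z)}^2$, compute the joint covariance of $(F(z),F'(z))$ from \eqref{eq_gef}, and evaluate the conditional second moment by Gaussian regression, which you make explicit via the decorrelated variable $R(z)=F'(z)-\bar z F(z)$ (independent of $F(z)$, with variance $\noise^2 e^{\abs{z}^2}$). The only cosmetic difference is that the paper first rescales to reduce to $\noise=1$, whereas you carry $\noise$ through the computation.
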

\begin{proof}
  The set of zeros and thus $\rho_1(\zeta)$ does not change if we scale $F$ by a fixed constant. 
  Hence, by considering the function $\frac{1}{\noise}F$ in place of $F$, we can assume that $\noise=1$.
	The expected number of points $\{z\in B: F(z)=0\}$ of a Gaussian random field $F$ is given by Kac-Rice's formula:
	\begin{equation}
	\E[\card{\{z\in B: F(z)=0\}}]
	= \int_B  \E\big[\abs{\det DF(\zeta)} \, \big\vert \, F(\zeta)=0\big] \,p_{F(\zeta)}(0) \,\dm(\zeta),
	\label{eq:kacrice}
	\end{equation}
	where $p_{F(\zeta)}(0)$ is the probability density of $F(\zeta)$ at $0$; see, e.g.,  \cite[Th.~6.2]{level}.
	
	We first compute the value
	\begin{align}\label{eq_axxx}
	p_{F(\zeta)}(0) = \frac{1}{\pi} e^{-\abs{\zeta}^2} e^{-\abs{F^1(\zeta)}^2e^{-\abs{\zeta}^2}}.
	\end{align}
	Second, since $F$ is analytic, the determinant in \eqref{eq:kacrice} can easily be seen to simplify to $\abs{\det DF(\zeta)} = \abs{\partial_\zeta F(\zeta)}^2$.
	The joint vector $(F(z), \partial_z F(z))$ has mean $(F^1(z), \partial_z F^1(z))$ and covariance
	\begin{equation}
	\Cov[(F(z), \partial_z F(z))]
	=
	\begin{pmatrix}
	e^{\abs{z}^2} & z e^{\abs{z}^2}\\
	\overline{z} e^{\abs{z}^2} & (1+ \abs{z}^2) e^{\abs{z}^2}
	\end{pmatrix}.
	\end{equation}
	Following a Gaussian regression approach, see, e.g., \cite[Prop.~1.2]{level}, the conditional expectation of $\abs{\partial_\zeta F(\zeta)}^2$ given $F(\zeta)=0$ is the same as the expectation of $\abs{W}^2$, where $W=\partial_\zeta  F^1(\zeta)-\overline{\zeta}F^1(\zeta) + W_0$ and
	$W_0$ is a circularly symmetric complex Gaussian random variable with variance $e^{\abs{\zeta}^2}$ (and zero mean).
	Thus,
	\begin{equation}\label{ayyy}
	\E\big[\abs{\det DF(\zeta)} \, \big\vert \, F(\zeta)=0\big]
	= e^{\abs{\zeta}^2} + \abs{\partial_\zeta F^1(\zeta)-\overline{\zeta}F^1(\zeta)}^2.
	\end{equation}
	Inserting \eqref{ayyy} and \eqref{eq_axxx} into \eqref{eq:kacrice} yields \eqref{eq_rho1}.
\end{proof}

\section{Proof of Theorem \ref{mth}}\label{sec_proof}
We present the proof of Theorem \ref{mth} in several steps. The strategy is two-fold: (i) to show that 
computed zeros are close to true ones, we relate the comparison test
\eqref{eq_test} to a similar property involving non-grid points and apply the minimum principle from Lemma \ref{lemma_min}; (ii) to show that true zeros do trigger a detection, we show that the test \eqref{eq_test} 
is satisfied by linearly approximating the input function. The two objectives are in tension: while a large comparison margin $\eta_\gp$ would facilitate (i) by absorbing possible oscillations between a grid and a close-by non-grid point, a small margin makes the comparison test easier to satisfy and thus facilitates (ii). The core of the proof consists in showing that the adaptive margin \eqref{eq_th} strikes the desired balance with high probability.

Initially we bound the Hausdorff distance between the exact and computed zero sets (showing that each of the sets lies in a small neighborhood of the other). We then refine this conclusion to a bound on the Wasserstein distance by analyzing the sieving step.

\subsection{Preparations}
Let $L$, $\sigma$, $\delta$, and $F$ satisfy the assumptions of the theorem, and denote by $\outseta$ the set produced by the AMN algorithm after the selection step. 
Recall that $L \geq 1$.
By choosing a sufficiently large constant in \eqref{eq_suc_pro}, we can assume that $\delta \leq 1/5$; otherwise, the success probability would be trivial. For the same reason, we can assume that
\begin{align}\label{eq_wlog}
\delta^4 \exp\left(\frac{A^2}{8\noise^2}\right) \leq 1.
\end{align}

\subsection{Excluding bad events}\label{sec_bad}
We 
let $\beconst = 8 \noise$ and wish to
apply Lemma \ref{lemma_lin} with 
$t=\beconst \sqrt{\logi}$. By \eqref{eq_wlog},
\begin{align*}
\delta^4 \exp\left(\frac{A^2}{16\noise^2}\right) \leq
\delta^4 \exp\left(\frac{A^2}{8\noise^2}\right) \leq 1.
\end{align*}
Hence $t \geq A$ and we can apply Lemma \ref{lemma_lin} to conclude that
\begin{align}
\label{eq_aaaa0}
& \abs{F_w(z) - \big(F_w(0) + F_w'(0) z \big)} 
\leq \beconst \cdot \sqrt{\logi} \cdot |z|^2
\leq 2 \beconst \cdot \sqrt{\logi} \cdot |z|_{\infty}^2
,
\\
\label{eq_aaaa1}
& \abs{F_w(z)e^{-\frac12 |z|^2} - \big(F_w(0) + F_w'(0) z \big)}
\leq \beconst \cdot \sqrt{\logi} \cdot |z|^2
\leq 2 \beconst \cdot \sqrt{\logi} \cdot |z|_{\infty}^2
,
\end{align}
for all $w \in \domainplus$ and $|z| \leq 10$, except for an event of probability at most
$C L^2 \exp\big[ -(t-\conf)^2/(8\sigma^2)\big]$,
where $C$ is an absolute constant.
Since $(t-\conf)^2 \geq \frac{t^2}{2}-\conf^2$, we further have 
\begin{align*}
C L^2 \exp\bigg({-}\frac{(t-\conf)^2}{8\sigma^2}\bigg)
&\leq
C L^2 \exp\bigg(\frac{\conf^2}{8\sigma^2}\bigg) \delta^{\frac{\beconst^2}{16\sigma^2}}
= 
C L^2 \exp\bigg(\frac{\conf^2}{8\sigma^2}\bigg) \delta^{4}.
\end{align*}
Second, we select a large absolute constant $\cons>1$ to be specified later, and use Lemma~\ref{lemma_mult}
with
\begin{align*}
\alpha &= \cons \beconst \cdot \sqrt{\logi} \cdot \delta^2,
\\
\beta &= 2 \cons \beconst \cdot \sqrt{\logi} \cdot \delta,
\end{align*}
to conclude that, for each grid point $\gp \in \gridlplus$,
\begin{align}\label{eq_b}
\mbox{either } |F_\gp(0)| > \alpha, \mbox{ or }
|F'_\gp(0)| > \beta,\qquad \mbox{(possibly both)},
\end{align}
except for an event with probability at most
$\lesssim  L^2 \log^2(1/\delta) \delta^4$.

Overall we have excluded events with total probability
\begin{align*}
\lesssim  L^2  \exp\bigg(\frac{\conf^2}{8\sigma^2}\bigg)
\log^2(1/\delta)
\delta^{4}.
\end{align*}
In what follows, we show that under the complementary events the conclusions of Theorem \ref{mth} hold.

\subsection{The true zeros are adequately separated}

We claim that, by taking $\cons$ sufficiently large, the set $\{F=0\} \cap \domainplusdelta$ satisfies:
\begin{align}\label{eq_sep}
\inf \Big\{ |\zeta-\zeta'|_\infty : \zeta, \zeta' \in \{F=0\} \cap \domainplusdelta, \zeta \not= \zeta' \Big\} > 7 \delta.
\end{align}

Suppose that $\zeta, \zeta' \in \{F=0\} \cap \domainplusdelta$ are such that $0<|\zeta-\zeta'|_\infty \leq 7 \delta$.
Since $L/\delta \in \mathbb{N}$, we can select a lattice point $\lambda \in \gridlplus$ such that $0 < |\lambda-\zeta| \leq \delta$.
We now use repeatedly \eqref{eq_aaaa0} and \eqref{eq_aaaa1}. 

First, we use \eqref{eq_aaaa0} with $w=\zeta$
and $z=\zeta'-\zeta$, and note
that $F_\zeta(\zeta'-\zeta)=0$ and $F_{\zeta}(0)=0$
while $|\zeta-\zeta'| \leq \sqrt{2} |\zeta-\zeta'|_{\infty} \leq 7 \sqrt{2}  \delta \leq 10$ to obtain:
\begin{align*}
\abs{F_\zeta'(0)} \abs{\zeta'-\zeta}
\leq \beconst \cdot \sqrt{\logi} \cdot |\zeta'-\zeta|^2.
\end{align*}
Since $\zeta \not= \zeta'$, we conclude:
\begin{align}\label{eq_m1}
\abs{F_\zeta'(0)}
\leq \beconst \cdot \sqrt{\logi} \cdot |\zeta'-\zeta|.
\end{align}
Second, we similarly apply \eqref{eq_aaaa1} with $w=\zeta$ and $z=\lambda-\zeta$, to obtain
\begin{align*}
\abs{F_\zeta(\lambda-\zeta)\cdot e^{-\frac12 |\lambda-\zeta|^2} - F_\zeta'(0) \cdot (\lambda-\zeta)}
\leq \beconst \cdot \sqrt{\logi} \cdot |\lambda-\zeta|^2.
\end{align*}
Combining the last equation with \eqref{eq_m1} yields
\begin{align}\label{eq_m2}
\abs{F_\zeta(\lambda-\zeta)\cdot e^{-\frac12 |\lambda-\zeta|^2}}
\leq \beconst \cdot \sqrt{\logi} 
\cdot \left(|\lambda-\zeta|^2 +
|\zeta'-\zeta| \cdot |\lambda-\zeta| \right).
\end{align}
Third, we apply \eqref{eq_aaaa0} with $w=\lambda$ and $z=\zeta-\lambda$ to obtain
\begin{align}\label{eq_m3}
\abs{F_\lambda(0) + F_\lambda'(0) \cdot (\zeta-\lambda)} 
\leq \beconst \cdot \sqrt{\logi} \cdot |\zeta-\lambda|^2.
\end{align}
Note that $|F_\lambda(0)| = \abs{F_\zeta(\lambda-\zeta)}\cdot e^{-\frac12 |\lambda-\zeta|^2}$. Hence, combining \eqref{eq_m2} and \eqref{eq_m3} we obtain:
\begin{align*}
|F_\lambda(0)| &\leq 
\beconst \cdot \sqrt{\logi} \cdot
\left(|\lambda-\zeta|^2 +
|\zeta'-\zeta| \cdot |\lambda-\zeta| \right),
\\
\abs{F_\lambda'(0)} \cdot|\lambda-\zeta| &\leq 
\beconst \cdot \sqrt{\logi} \cdot
\left(2 |\lambda-\zeta|^2 +
|\zeta'-\zeta| \cdot |\lambda-\zeta| \right).
\end{align*}
Since $0<|\lambda-\zeta| \leq \delta$ and $|\zeta-\zeta'|_{\infty}\leq 7\delta$, we conclude that
\begin{align*}
|F_\lambda(0)| &\leq 
\beconst \cdot \sqrt{\logi} \cdot
\left(|\lambda-\zeta|^2 +
|\zeta'-\zeta| \cdot |\lambda-\zeta| \right)
\leq 11 \cdot \beconst \cdot \delta^2 \cdot \sqrt{\logi},
\\
\abs{F_\lambda'(0)}&\leq 
\beconst \cdot \sqrt{\logi} \cdot
\left(2 |\lambda-\zeta| +
|\zeta'-\zeta| \right) \leq 12 \cdot \beconst \cdot \delta \cdot \sqrt{\logi}.
\end{align*}
Assuming as we may that $\cons > 11$, this contradicts \eqref{eq_b}. Thus, \eqref{eq_sep} must indeed hold.

\subsection{Linearization holds with estimated slopes}
For each $\gp \in \gridl$, we use the notation
\begin{align*}
\tau_{\gp} =  \frac{F_{\gp}(\delta) - F_{\gp}(0)}{\delta},
\end{align*}
and observe that, by \eqref{eq_aaaa0},
\begin{align}\label{eq_aaaa}
\big|
\tau_{\gp} - F_{\gp}'(0) \big| \leq
\beconst \cdot \sqrt{\logi} \cdot \delta.
\end{align}
Combining this with \eqref{eq_aaaa1}, we conclude that for $|z|_{\infty} \leq 2\delta$ and $\gp \in \gridl$,
\begin{align}
\bigabs{F_\gp(z)e^{-\frac12 |z|^2} - \big(F_\gp(0) + \tau_\gp z \big)}
& \leq \bigabs{F_\gp(z)e^{-\frac12 |z|^2} - \big(F_\gp(0) + F_{\gp}'(0) z\big)} + \abs{z} \bigabs{F_{\gp}'(0) - \tau_\gp}
\notag \\
&\leq 
2 \beconst \cdot \sqrt{\logi} \cdot \abs{z}_{\infty}^2 + \abs{z} \cdot \beconst \cdot \sqrt{\logi} \cdot \delta
\label{eq_lina} \\
&\leq 
(8 + 2 \sqrt{2}) \cdot \beconst \cdot \delta^2 \cdot \sqrt{\logi}.
\notag \\
&\leq 
11 \cdot \beconst \cdot \delta^2 \cdot \sqrt{\logi}.\label{eq_lin}
\end{align}

\subsection{After the selection step, each true zero is close to a computed zero}
We show that
\begin{align}
\label{eq_H_1}
\left(\{F=0\} \cap \domain\right) &\subseteq \outseta + Q_{\delta/2}(0).
\end{align}
(Recall that $\outseta$ is the set produced after the selection step, while the cube $Q_\delta(0)$ is defined in Section \ref{sec_not}.)

Let $\zeta \in \domain$ be a zero of $F$.
Since $L/\delta \in \mathbb{N}$, we can find $\gp \in \gridl$ such that
$|\zeta-\gp|_\infty \leq \delta/2$. We show that $\gp \in \outseta$.

Let us first prove that
\begin{align}\label{eq_lb_tau}
|\tau_\gp| 
\geq 
\cons \beconst \sqrt{\logi} \cdot \delta.
\end{align}
Suppose to the contrary that 
$|\tau_\gp| < \cons \beconst \sqrt{\logi} \cdot \delta$. 
We will show that this contradicts \eqref{eq_b}. 
Assuming as we may that $\cons \geq 1$, by
\eqref{eq_aaaa},
\begin{align*}
|F'_\gp(0)| \leq \big(\cons\beconst+ \beconst \big) \sqrt{\logi} \cdot \delta \leq \beta,
\end{align*}
while, by \eqref{eq_aaaa0},
\begin{align*}
|F_\gp(0)| 
& = |F_\gp(0) - F_\gp(\zeta-\gp)|
\\
& \leq 
\bigabs{F_\gp(\zeta-\gp) - \big( F_\gp(0) + F_\gp'(0) (\zeta-\gp)\big)} + \bigabs{F_\gp'(0) (\zeta-\gp)}
\\
& \leq 
\beconst \sqrt{\logi} |\zeta-\gp|^2 + |F_\gp'(0)| |\zeta-\gp|
\\
& \leq 
\beconst  \sqrt{\logi} \bigg(\frac{\sqrt{2}\delta}{2}\bigg)^2 + |F_\gp'(0)| \frac{\sqrt{2}\delta}{2}
\\
&\leq \frac{\beconst}{2} \sqrt{\logi}\delta^2 + \frac{\sqrt{2}}{2} (\cons\beconst+\beconst) \sqrt{\logi} \delta^2 
\\
& = \bigg(\frac{1+\sqrt{2}}{2} +\frac{\sqrt{2}}{2} \cons \bigg) \beconst\sqrt{\logi}\delta^2
\\
& \leq \alpha,
\end{align*}
provided 
$\cons \geq \frac{1+\sqrt{2}}{2-\sqrt{2}}$.
This indeed contradicts \eqref{eq_b}. We conclude that \eqref{eq_lb_tau} holds.

Second, we show that $\gp \in \outseta$ by showing that the test
\eqref{eq_test} is satisfied.
By \eqref{eq_lina}, and since $|\zeta-\gp|_\infty \leq \delta/2$, we have
\begin{align}
	|F_\gp(0)| 
  & \leq 
	\bigabs{F_\gp(\zeta-\gp)e^{-\frac12 |\zeta-\gp|^2} - (F_\gp(0)+ \tau_\gp (\zeta-\gp))} 
  + |\tau_\gp| |\zeta-\gp|
  \notag \\
  & \leq 2 \beconst \sqrt{\logi}  \delta^2 + \frac{\sqrt{2}}{2}|\tau_\gp| \delta.
  \label{eq:boundfgp0}
\end{align}
Choosing $\cons \geq \frac{8}{3-2\sqrt{2}}$, \eqref{eq:boundfgp0} and \eqref{eq_lb_tau} further imply
\begin{align}
	|F_\gp(0)| 
  \leq  \bigg( \frac{3-2\sqrt{2}}{4} +\frac{\sqrt{2}}{2}\bigg)  \lvert \tau_\gp\rvert\, \delta
  = \frac{3}{4}\lvert \tau_\gp\rvert\, \delta.
 \label{eq:f0smallerdeltatau}
\end{align}
Hence,
\begin{align}\label{eq_maxis}
\eta_\gp = \frac{3}{4} |\tau_\gp| \delta.
\end{align}
Let  $\gpp \in \grid$ be an arbitrary lattice point with $|\gpp-\gp|_{\infty}=2\delta$.
By \eqref{eq_lin}, 
\begin{align*}
|F_\gp(\gpp-\gp)|e^{-\frac12 |\gpp-\gp|^2} 
& =
|F_\gp(\gpp-\gp)e^{-\frac12 |\gpp-\gp|^2} - F_\gp(\zeta-\gp)e^{-\frac12 |\zeta-\gp|^2}|
\\
& = 
\big\lvert F_\gp(\gpp-\gp)e^{-\frac12 |\gpp-\gp|^2} - (F_\gp(0) + \tau_\gp (\gpp-\gp))
 \\*
& \quad 
- F_\gp(\zeta-\gp)e^{-\frac12 |\zeta-\gp|^2} + (F_\gp(0) + \tau_\gp (\zeta-\gp)) 
+ \tau_\gp (\gpp-\zeta)\big\rvert
\\
&\geq |\tau_\gp| |\gpp-\zeta| - (11+2) \beconst \sqrt{\logi}  \delta^2
\\
&\geq |\tau_\gp| |\gpp-\zeta|_{\infty} - 13 \beconst \sqrt{\logi}  \delta^2
\\
&\geq |\tau_\gp| (|\gpp-\gp|_{\infty}-|\zeta-\gp|_{\infty}) -  13 \beconst \sqrt{\logi}  \delta^2
\\
&\geq \frac{3}{2}|\tau_\gp| \delta -  13 \beconst \sqrt{\logi}  \delta^2.
\end{align*}
Together with \eqref{eq:boundfgp0}, this implies
\begin{align*}
|F_\gp(\gpp-\gp)|e^{-\frac12 |\gpp-\gp|^2} 
\geq |F_\gp(0)| + \frac{3-\sqrt{2}}{2} |\tau_\gp| \delta - 15 \beconst
\sqrt{\logi}  \delta^2.
\end{align*}
Finally, we use \eqref{eq_lb_tau} to analyze the obtained comparison margin against \eqref{eq_maxis}:
\begin{align*}
&\frac{3-\sqrt{2}}{2} |\tau_\gp| \delta - 15 \beconst
\sqrt{\logi}  \delta^2
\\
&\qquad\geq \frac{3}{4}|\tau_\gp| \delta + \frac{3-2\sqrt{2}}{4} \Big(\cons \beconst \sqrt{\logi}  \delta^2 \Big)
- 15 \beconst
\sqrt{\logi}  \delta^2
\\
&\qquad= \frac{3}{4}|\tau_\gp| \delta + \bigg(\frac{3-2\sqrt{2}}{4} \cons -  15 
\bigg) \beconst\sqrt{\logi} \delta^2
\\
&\qquad \geq \eta_\gp,
\end{align*}
where we fixed the value of $\cons$ so that $\frac{3-2\sqrt{2}}{4} \cons - 15  \geq 0$.
Therefore, the point $\lambda$ passes the selection test \eqref{eq_test} (as formulated in \eqref{eq_testb}), i.e., $\lambda \in \outseta$, as claimed.

\subsection{After the selection step, each computed zero is close to a true zero}
We show that
\begin{align}
\label{eq_H_2}
\outseta &\subseteq \{F=0\} + Q_{2\delta}(0).
\end{align}
Let $\gp \in \outseta$ be a computed zero, and let us find a zero $z$ of $F$ with $|\gp-z|_\infty \leq 2 \delta$.
In terms of the Fock shift $F_\gp$, the success of the test \eqref{eq_test} reads,
\begin{align}\label{eq_test2}
|F_\gp(\gpp)| e^{-\frac12 |\gpp|^2} \geq |F_\gp(0)| + \eta_\gp,
\quad
\mbox{for all } \gpp \in \grid \mbox{ such that } |\gpp|_\infty = 2 \delta;
\end{align}
see \eqref{eq_testb}. For an arbitrary $z \in \mathbb{C}$ with $|z|_\infty = 2 \delta$, we can find a lattice point $\gpp \in \grid$ with $|\gpp|_\infty= 2\delta$
such that $|z-\gpp|=|z-\gpp|_\infty \leq \delta/2$. 
Hence, by~\eqref{eq_lin},
\begin{align*}
|F_\gp(z)e^{-\frac12 |z|^2} - F_\gp(\gpp)e^{-\frac12 |\gpp|^2}|
& \leq 
\bigabs{F_\gp(z)e^{-\frac12 |z|^2} - \big(F_\gp(0) + \tau_\gp z \big)}
\\*
& \quad 
 + \bigabs{F_\gp(\gpp)e^{-\frac12 |\gpp|^2} - \big(F_\gp(0) + \tau_\gp \gpp \big)}
\\*
& \quad 
+ \abs{\tau_\gp} \abs{z- \gpp}
\\
& \leq \tfrac12 |\tau_\gp| \delta + 22 \beconst \cdot  \sqrt{\logi} \delta^2.
\end{align*}
By \eqref{eq_b} and \eqref{eq_aaaa}, 
either $|\tau_\gp|\delta\geq \cons \beconst \sqrt{\logi} \cdot \delta^2$ 
or $|F_\gp(0)| \geq \cons \beconst \sqrt{\logi} \cdot \delta^2 \geq |\tau_\gp|\delta$.
Choosing $\cons \geq 88$ ensures in the first case that 
\begin{align*}
|F_\gp(z)e^{-\frac12 |z|^2} - F_\gp(\gpp)e^{-\frac12 |\gpp|^2}|
\leq 
\frac{3}{4} |\tau_\gp|\delta
& \leq  \eta_\gp
\end{align*}
and in the second case
\begin{align*}
|F_\gp(z)e^{-\frac12 |z|^2} - F_\gp(\gpp)e^{-\frac12 |\gpp|^2}|
\leq 
\frac{3}{4} |F_\gp(0)|
& \leq  \eta_\gp.
\end{align*}
Combining this with \eqref{eq_test2}, we conclude that
\begin{align}\label{eq_test3}
|F_\gp(z)| e^{-\frac12 |z|^2} \geq |F_\gp(0)|,
\quad
\mbox{for all } z \in \mathbb{C} \mbox{ such that } |z|_\infty = 2 \delta.
\end{align}
By Lemma \ref{lemma_min}, there exists $w_\gp \in \mathbb{C}$ with $|w_\gp| \leq 2 \delta$ such that $F_\gp(w_\gp)=0$. This means that $z_\gp:=w_\gp+\lambda$ is a zero of $F$ that satisfies $|z_\gp -\gp|_\infty \leq 2 \delta$, as desired.

\subsection{Definition of the map $\map$}
We now look into the \emph{sieving step} of the AMN algorithm and analyze the final output set $\outset$.

Given $\zeta \in \{F=0\} \cap \domain$ we claim that there exists $\gp \in \outset$ such that $|\zeta - \gp|_\infty \leq 2 \delta$. Suppose to the contrary that
\begin{align}\label{eq_l1}
|\zeta - \gp|_\infty > 2 \delta, \qquad \gp \in \outset.
\end{align}
By \eqref{eq_H_1}, there exists $\gpp \in \outseta$ such that
$|\zeta-\gpp|_\infty \leq \delta/2$. By \eqref{eq_l1}, $\outset \subsetneq \outset \cup \{\gpp\}$. We claim that $\outset \cup \{\gpp\}$ is $5\delta$-separated. For this, it suffices to check that
\begin{align*}
|\gpp-\gp|_\infty > 4\delta, \qquad \gp \in \outset.
\end{align*}
If $\gp \in \outset$, by \eqref{eq_H_2}, there exist $\zeta' \in \{F=0\}$ such that $|\zeta'-\gp|_\infty \leq 2 \delta$. If $\zeta'=\zeta$, then $|\zeta-\gp|_\infty \leq 2 \delta$, contradicting \eqref{eq_l1}. Thus $\zeta \not= \zeta'$, while, $\zeta' \in \outset+Q_{2\delta} \subset \domainplusdelta$. Hence, we use \eqref{eq_sep} to conclude that
\begin{align*}
|\gpp - \gp|_\infty \geq |\zeta-\zeta'|_\infty - |\gpp-\zeta|_\infty - |\gp-\zeta'|_\infty \geq 7 \delta - \delta/2 - 2 \delta > 4\delta.
\end{align*}
Thus, the set is $5\delta$-separated:
\begin{align*}
\inf \Big\{ |\gp-\gp'|_\infty : \gp, \gp' \in \outset \cup \{\gpp\}, \gp \not= \gp' \Big\} \geq 5 \delta,
\end{align*}
contradicting the maximality of $\outset$. It follows that a point $\gp \in \outset$ such that $|\zeta - \gp|_\infty \leq 2 \delta$ must exist. We choose any such point, and define  $\map(\zeta) = \gp$.

\subsection{Verification of the properties of $\map$}
By construction, the map $\map$ satisfies \eqref{eq_dist}. We now show the remaining properties. To show that $\map$ is injective, assume that $\map(\zeta)=\map(\zeta')$. Then, by \eqref{eq_dist},
\begin{align*}
|\zeta-\zeta'|_\infty \leq |\map(\zeta) - \zeta|_ \infty + |\map(\zeta') - \zeta'|_ \infty \leq 4 \delta.
\end{align*}
Hence, by \eqref{eq_sep}, we must have $\zeta=\zeta'$.

Finally, assume that $\gp \in \outset \cap \domainminus$ and use \eqref{eq_H_2} to select a zero $\zeta \in \{F=0\}$ such that $|\zeta-\gp|_\infty \leq 2 \delta$. Then 
$\zeta \in \domain$, and, by \eqref{eq_dist},
\begin{align}
|\map(\zeta) - \gp|_\infty \leq |\map(\zeta) - \zeta|_\infty + |\zeta-\gp|_\infty \leq 4 \delta.
\end{align}
As $\gp, \map(\zeta) \in \outset$ and $\outset$ is $5\delta$-separated (see \eqref{eq_7s}) we conclude that $\gp=\map(\zeta)$, as claimed.

This concludes the proof of Theorem \ref{mth}. \qed

\section{Numerical Experiments}
\label{sec_nums}
In this section we perform a series of tests of the AMN algorithm and compare its performance with MGN and thresholding supplemented with a sieving step~(ST).

\subsection{Simulation}\label{sec_sim}
We first discuss how to simulate samples from the input model \eqref{eq_rf}. To make simulations tractable, we introduce a fast method to draw samples of the Gaussian entire function $F^0$ given by \eqref{eq_gef} on the finite grid \eqref{eq_gridl}. The method is based on the relation between the Bargmann transform and the short-time Fourier transform \eqref{eq_stft} and amounts to discretizing the underlying signal~$f$.

We fix $L>0$, $T>0$, and $\delta > 0$.
For convenience, we further let $\sigma = 1$ and assume that $T\delta^{-1}$ is an integer.
Recall that we also assumed that $L\delta^{-1}$ is an integer.

To model a discretization of $\mathcal{N}$,
we take i.i.d.\ noise samples in the interval $[-T-L, T+L] \subseteq \mathbb{R}$ spaced by a distance $\delta$.
More specifically, we consider a random vector $w=(w_{-(T+L)\delta^{-1}},  \ldots, w_{(T+L)\delta^{-1}})$, where the elements
$w_{s} \sim \mathcal{N}_\mathbb{C}(0, \delta)$ are independent, i.e., $\E[w_s \overline{w_{s}}]=\delta$, and $\E[w_s \overline{w_{s'}}]=0$ for $s \not= s'$.
Here, $w_{s}$ can be interpreted as an integration of $\mathcal{N}$ over the interval $[\delta s, \delta (s+1)]$.

Let $f^1\colon \mathbb{R} \to \mathbb{C}$,
$\genw = g|_{[-T,T]}$ the restriction of $g(t) = (\tfrac{2}{\pi})^{\frac14} \,e^{-t^2}$ to the compact support $[-T, T]$
and define
\begin{align}\label{eq_discrete_stft_white_noise}
\discSTFT (k+ij)
	:= \sum_{s=-T\delta^{-1} + k}^{T\delta^{-1}+k}
					\left(w_s + \delta f^1(\delta s) \right) \overline{\genw(\delta(s-k))}
					e^{-2 i s j \delta^{2} },
\end{align}
for $k,j \in \{-L\delta^{-1}, \dots, L\delta^{-1}\}$.
The mean of $\discSTFT$ is given by
\begin{align*}
		\mathbb{E}[\discSTFT(k+ij)]
		= \delta \hspace{-2.2mm}\sum_{s=-T\delta^{-1} + k}^{T\delta^{-1}+k}\hspace{-2.2mm}
		f^1(\delta s) \overline{\genw\left(\delta(s-k)\right)}
		e^{-2 i s j \delta^{2} },
		\qquad k,j \in \{-L\delta^{-1}, \dots, L\delta^{-1}\},
\end{align*}
and approximates the integral
\begin{equation*}
	\int_{-\infty}^{\infty}  f^1(t) g(t-x) e^{-2 i y t} dt 
  = e^{-i x y} e^{-\frac12 (x^2+y^2)} F^1(\overline{z}),
\end{equation*}
with $x= \delta k$ and $y= \delta j$.
Furthermore, the covariance of $\discSTFT$ is 
\begin{align*}
	& \operatorname{Cov}\big( \discSTFT (k+ij),    \discSTFT (k'+ij') \, \big)
  \\
	& \qquad = \delta
	 	\sum_{s=-T\delta^{-1}+k'}^{T\delta^{-1}+k'}	 \genw	\left(\delta (s-k')	\right)
									 		\overline{\genw	\left(\delta (s-k)	\right)}
						 						e^{-2 i s (j-j') \delta^{2}}.
\end{align*}
For small $\delta$ and sufficiently large $T$, this is an approximation of the integral
\begin{align}\label{eq_covariance}
	 \int_{-\infty}^{\infty} g\big(t- \delta k'\big) \overline{g\big(t-\delta k\big)} e^{-2 i t (j-j') \delta}\, dt
  & = e^{-\frac{u^2+v^2+x^2+y^2}{2}} e^{i(uv-xy)}  e^{(x-iy)(u+iv)},
\end{align}
with $x= \delta k$, $y= \delta j$, $u= \delta k'$, and $v= \delta j'$.
Therefore, if we take $T$ large enough so that
 we can ignore the numerical error introduced by the truncation of the normalized Gaussian window $g$,
 we obtain in \eqref{eq_discrete_stft_white_noise} a random  Gaussian vector whose covariance structure 
 approximates the right-hand side of \eqref{eq_covariance} on the grid $\gridl$, provided that $\delta$ is small.
 
 To obtain a vector whose covariance structure approximates \eqref{eq_gef} we proceed as follows. By conjugating $z$ in \eqref{eq_discrete_stft_white_noise} and multiplying by the deterministic factor $e^{-i x y}$,
we obtain an approximate sampling of \eqref{eq_rf} with weight $e^{-\frac12 |z|^{2}}$:
\begin{equation}\label{eq_wbt}
	e^{-\frac12 |z|^{2}} \bt(z)
	\approx e^{-i x y} \,  \discSTFT (\bar{z})
\end{equation}
for $z=\delta k + i \delta j$. We  carry out all computations with the weighted function \eqref{eq_wbt}, as the unweighted version can lead to floating point arithmetic problems. Note that, for a grid point $\gp$, the comparison margin \eqref{eq_th} can be 
expressed in terms of $e^{-\frac{1}{2}|\, \cdot \,|^{2}} F(\,\cdot\,)$ as
\begin{align*}
	\eta_\gp = 
	\max\big\{
	e^{-\frac{1}{2} | \gp|^2} \abs{F(\gp)}, 
	\tfrac{3}{4}
	\big|
	e^{\frac{1}{2} \delta (2i \operatorname{Im}(\lambda) + \delta )  }
	e^{-\frac{1}{2}|\lambda+\delta|^{2}}
	F(\lambda+\delta) - e^{-\frac{1}{2}|\lambda|^{2}} F(\lambda)
	\big|
	\big\}.
\end{align*}

\pagebreak[1]
\subsection{Specifications for the experiments}
\subsubsection{Implementation of the sieving step in AMN}\label{sec_siev_step}
In order to fully specify the AMN algorithm we need to fix an implementation of the sieving step, which provides a subset $Z \subseteq Z_1$ satisfying \eqref{eq_7s}, and such that no proper superset $Z_1 \supseteq \tilde{Z} \supsetneq Z$ satisfies \eqref{eq_7s}. We choose an implementation that uses knowledge of the input $F$ to decide which points are to be discarded.
We assume that $Z_1$ is non-empty, otherwise $Z$ is trivial.

\noindent\rule{\textwidth}{1pt}
\noindent {\bf Algorithm \sm: \,}{Obtain a maximal subset that is $5\delta$ separated.}\vspace{-0.2cm}\\
\noindent\rule{\textwidth}{1pt}

\alstep{Input}: Values of a function $F$ on a grid $\gridl$. A discrete non-empty set $Z_1\subseteq \gridl$.

\medskip

\alstep{Step 1}: Copy the set $Z_1$ to $Z_1^\text{aux}$. 

\alstep{Step 2}: Consider the (pre)ordered set $(Z_1^\text{aux}, \symbOrder)$, where
\begin{align}\label{eq_order}
	\gp \symbOrder \gpp \quad \iff \quad e^{-\frac12 | \gp |^2 } \abs{F(\gp)} \leq e^{-\frac12 | \gpp |^2 } \abs{F(\gpp)}, \qquad \gp,\gpp\in\gridl.
\end{align}

\alstep{Step 3}: Choose a minimal point $\gp \in (Z_1^\text{aux}, \symbOrder)$.

\alstep{Step 4}: Add $\gp$ to $Z$.

\alstep{Step 5}: Remove all $\gpp \in Z_1^\text{aux}$ such that 
\begin{equation}
\label{eq_6delta}
	0 \leq |\gp-\gpp|_{\infty} \leq 4\delta.
\end{equation}

\alstep{Step 6}: If the set $Z_1^\text{aux}$ is not empty, repeat Steps 3--6. If the set $Z_1^\text{aux}$ is empty, then the algorithm ends.

\alstep{Output}: The set $\outset$.\\
\noindent\rule{\textwidth}{1pt}

The resulting set $Z \subseteq Z_1$ always satisfies \eqref{eq_7s}. Moreover, any superset $\tilde{Z} \supsetneq Z$ included in $Z_1$ must contain some of the discarded points $\gpp \in Z_1$, which by construction satisfy \eqref{eq_6delta} for some $\gp \in Z$, and therefore $\tilde{Z}$ is not $5\delta$ separated. Thus, $Z$ is indeed maximal with respect to  \eqref{eq_7s}.

The choice of $\gp \in Z_1^\text{aux}$ in Step 3 of \sm\ is not essential. 
Our particular choice is motivated by finding the zeros of $F$; however, we did not observe any significant performance difference
when using other algorithms than \sm\ as the sieving step of AMN.

\subsubsection{Specification of the compared algorithms}\label{sec_algos}
Given the values of a function $F\colon \mathbb{C} \to \mathbb{C}$ on the grid
$\gridl$, we consider the following three algorithms to compute an approximation of $\{F=0\} \cap \gridlmo$.
\begin{itemize}[itemsep=0.35cm]
	\item	AMN: the AMN algorithm run with domain length $L-1$ and with sieving step \sm{} implemented as described in Section \ref{sec_siev_step},
	\item 	MGN: outputs the set of all grid points $\lambda \in \gridlmo$ such that
	\begin{equation}\label{eq_mgn}
	e^{-\frac{1}{2}|\gp|^{2}}|F(\lambda)| \leq e^{-\frac{1}{2}|\gpp|^{2}}|F(\mu)|, \quad|\gp-\gpp|_{\infty}=\delta.
	\end{equation} 
	\item	ST: outputs the set of grid points $\lambda \in \gridlmo$ obtained as the result of applying the sieving algorithm \sm{} to 
	\begin{align*}
	\left\{ \gp \in \gridlmo : e^{-\frac12|\gp|^2} |F(\gp)| \leq 2 \delta \right\}.
	\end{align*}
\end{itemize}
Note that each of the algorithms relies only on the samples of $F$ on
$\gridlmoplus$. The use of a common input grid $\gridl$ simplifies the notation when considering various grid spacing parameters $\delta$.

\subsubsection{Varying the grid resolution}\label{sec_resolutions}
In the numerical experiments, we start with a small minimal spacing value $\delta=\delta_{\text{Hi}}$, that provides a high resolution approximation in \eqref{eq_discrete_stft_white_noise}, and simulate $F$ as in Section \ref{sec_sim}. We then incrementally double $\delta$ to produce coarser grid resolutions and subsample $F$ accordingly. More precisely, each element of the grid $\gridl$ can be written as 
\begin{align}
\gp_{k,l} = (-L + k\delta) + i(-L + l\delta), \qquad {\parbox{3cm}{$0\leq k \leq M$,\\ $0\leq l \leq N,$}}
\end{align}
for adequate $M$, $N >0$. If $F$ is given on $\gridl$, we subsample it by setting
\begin{align}\label{eq_subs}
\subs(F)(\gp_{k,l}) := F\big(\gp_{2k +i 2l}\big),
\end{align}
for values $(k,l)$ such that the indices $2k +i 2l$ are valid.

\subsection{Faithfulness of simulation of zero sets}

As a first test, we simulate random inputs from the model \eqref{eq_rf}, as specified in Section \ref{sec_sim}, apply the above-described three different algorithms, and test whether this process faithfully simulates the zero sets of the random function \eqref{eq_rf}. To this end, we estimate first or second order statistics on the computed zero sets by averaging over several realizations of \eqref{eq_rf}, and compare them to the corresponding expected values concerning the zero sets of \eqref{eq_rf}.

\subsubsection{No deterministic signal}
We first consider the case $F^1\equiv 0$ and $\noise=1$ in \eqref{eq_rf}.
Let $\discBargmann_{1}^{\delta_{\text{Hi}}}, \ldots, \discBargmann_{R}^{\delta_{\text{Hi}}}$ be $R$ independent realizations of samples of \eqref{eq_rf} on a grid $\gridl$ with resolution $\delta=\delta_{\text{Hi}}$, simulated as in Section \ref{sec_sim}. These are then subsampled with \eqref{eq_subs} yielding
$F_{r}^{\delta_{k}} = \subs^{(k)}(F_{r}^{\delta_{\text{Hi}}})$ and used as input for AMN, MGN, and ST, as specified in Section \ref{sec_algos}. The corresponding output sets are denoted $\zdm{r}$ where we omit the dependence on the method to simplify the notation. These sets should approximately correspond to $\{F_r=0\} \cap \gridlmo$, for $R$ independent realizations of \eqref{eq_rf}. We now put that statement to test.

The expected number of zeros of the random function $F$ on a Borel set $\Theta \subseteq \bC$ is
\begin{align}\label{eq_zeros_square}
\E[\card{\{F=0\} \cap \Theta}] 
= \int_{\Theta} \frac{1}{\pi} \, \dm(\zeta)
= \frac{\card{\Theta}}{\pi},
\end{align}
see, e.g., \cite[Section 2.4]{gafbook}. We define the following empirical estimator for the \emph{first intensity} $\rho_1=1/\pi$:
\begin{align}\label{eq_estrho1}
  \ei{\Theta}{r}{\delta} 
  = \frac{\card{\zdm{r} \cap \Theta}}{\card{\Theta}}.
\end{align}
If the computed set $\zdm{r}$ were replaced by $\{F=0\}$ in \eqref{eq_estrho1}, the estimator would be unbiased. The mean of the estimation error $\ei{\Theta}{r}{\delta} - 1/\pi$ thus measures the quality of the algorithm used to compute $\zdm{r}$, as it should be close to zero when the algorithm is faithful.
In Table~\ref{table_zero_1000_7_6}, we present the empirical means and the empirical standard deviations of the estimation error over $R=1000$ independent realizations $F_{r}^{\delta}$ for $L=7$, $\Theta = \domainminusone$, $T=6$, and various grid sizes $\delta$.

\begin{table}[tb]
\centering
\caption{Empirical means $\pm$ standard deviations of the estimation errors $\ei{\Theta}{r}{\delta}{} - 1/\pi$ for $\Theta=\domainminusone$, $L=7$, and $1000$ independent realizations.
Benchmark values for a faithful computation are $0$ for the mean and $0.01165$ for  the standard deviation.}\vspace{1mm}
\label{table_zero_1000_7_6}
\begin{tabular}{cccc}
\toprule
$\delta$ & AMN& MGN& ST \\ 
\midrule
$ 2^{-4} $ 	& $ -0.00120 \pm 0.01171 $ & $ -0.00048 \pm 0.01150 $ & $ +0.01868 \pm 0.02858 $ \\ 
$ 2^{-5} $ 	& $ -0.00062 \pm 0.01164 $ & $ -0.00057 \pm 0.01162 $ & $ +0.02189 \pm 0.04047 $ \\ 
$ 2^{-6} $ 	& $ -0.00065 \pm 0.01156 $ & $ -0.00064 \pm 0.01155 $ & $ +0.02280 \pm 0.05391 $ \\ 
$ 2^{-7} $ 	& $ -0.00068 \pm 0.01153 $ & $ -0.00068 \pm 0.01153 $ & $ +0.02354 \pm 0.06774 $ \\ 
$ 2^{-8} $ 	& $ -0.00062 \pm 0.01155 $ & $ -0.00062 \pm 0.01155 $ & $ +0.02424 \pm 0.07429 $ \\ 
$ 2^{-9} $ 	& $ -0.00067 \pm 0.01158 $ & $ -0.00067 \pm 0.01158 $ & $ +0.02390 \pm 0.07237 $ \\ 
\bottomrule
\end{tabular}
\vspace{5mm}
\end{table}

To derive a benchmark for the empirical standard deviation of $\ei{\Theta}{r}{\delta} - 1/\pi$, we express the variance of $\card{\{F=0\} \cap \Theta}/ \card{\Theta}$ in terms of the
\emph{second intensity function} $\rho_2(\zeta, \zeta')$
of $\{F=0\}$ as follows:
\begin{align}\label{eq_var}
  & \E\bigg[\bigg(\card{\{F=0\} \cap \Theta} - \frac{\card{\Theta}}{\pi}\bigg)^2\bigg] 
  \\[1mm]\nonumber
	& \quad 
  = \E\big[\card{\{F=0\} \cap \Theta}\cdot(\card{\{F=0\} \cap \Theta}-1)\big] 
  - \frac{\card{\Theta}^2}{\pi^2} + \frac{\card{\Theta}}{\pi}
  \\[1mm]\nonumber
  & \quad 
  = \int_{\Theta} \int_{\Theta} \rho_2(\zeta, \zeta') \, \dm(\zeta)\, \dm(\zeta')
  - \frac{\card{\Theta}^2}{\pi^2} + \frac{\card{\Theta}}{\pi}.
\end{align}
A formula for $\rho_2(\zeta, \zeta')$ is provided in \cite{Hannay} and numerical integration over $\Theta = \Omega_{L-1}$ results in 
$\sqrt{\Var[\card{\{F=0\} \cap \Theta}/ \card{\Theta}]} \approx 0.01165$.
We see in Table~\ref{table_zero_1000_7_6} that the methods AMN and MGN almost perfectly match the expected mean and standard deviation while ST does not.

\subsubsection{Deterministic signal plus noise}
We now consider the input model \eqref{eq_rf} with
$F^1 \not =0$ and $\noise=1$. We choose $F^1$ from Table \ref{table_fF1} and rescale it so that
$\conf = \sup_{\zeta\in \mathbb{C}} e^{-{\frac12 |\zeta|^2}}|F^1(\zeta)| $
holds for the signal intensities $\conf=1$ and $100$. 

We only test first order statistics of the computed zero sets. The benchmark is provided by Proposition \ref{prop_first}: the expected number of zeros of $F$ in $\Theta$ is
\begin{equation}\label{eq_ek_form}
	\E[\card{\{F=0\} \cap \Theta}] 
	= \int_{\Theta} \rho_1(\zeta) \, \dm(\zeta),
\end{equation}
where $\rho_1$ is given by \eqref{eq_rho1} (with $\noise=1$). 
For each of the tested algorithms, we define an estimator for the error resulting from replacing $\{F=0\}$ in \eqref{eq_ek_form} by the computed set $\zdm{r}$ (for $1\le r\le R$):
\begin{equation}\label{eq_est_nzm}
	\einz{\Theta}{r}{\delta}
	= \frac{\card{\zdm{r} \cap \Theta} - \int_{\Theta} \rho_1(\zeta) \, \dm(\zeta)}{\card{\Theta}}.
\end{equation}
As before, we simulate $R=100$ realizations of $F=F^0+F^1$ on a grid with a certain spacing $\delta$. The empirical average of $\einz{\Theta}{r}{\delta}$ over all realizations is denoted $\widehat{\beta}_{R}(\Theta,\delta)$. As $\rho_1$ is not constant when $F^1 \not= 0$, this time we calculate 
$\widehat{\beta}_{R}(\Theta,\delta)$ on $\Theta=\Omega_{L_1}$ for several values of $L_1$.

The results for $\delta=2^{-9}$ are depicted in Figure \ref{fig:gauss_100_7_6_A_1}. We see that the performance of AMN and MGN is indistinguishable, while ST may perform poorly even at such high resolution. Lower grid resolutions yield similar results.
\begin{table}
	\caption{Functions $f^1$ and their Bargmann transforms $F^1=\mathcal{B}(f)$.}
	\label{table_fF1}
	\begin{tabular}{cc}
		\toprule
		$f^1$ & $F^1$\\ 
    \midrule
		$f^1(t) = \big(\tfrac{2}{\pi})^{\frac14}\,e^{- t^2}$ & $F^1(\zeta)=1$ \\ 
		$f^1(t) =\big(\tfrac{2}{\pi})^{\frac14}\,2te^{- t^2}$ & $F^1(\zeta)=\zeta$ \\ 
    \bottomrule
	\end{tabular}
\vspace{5mm}
\end{table}
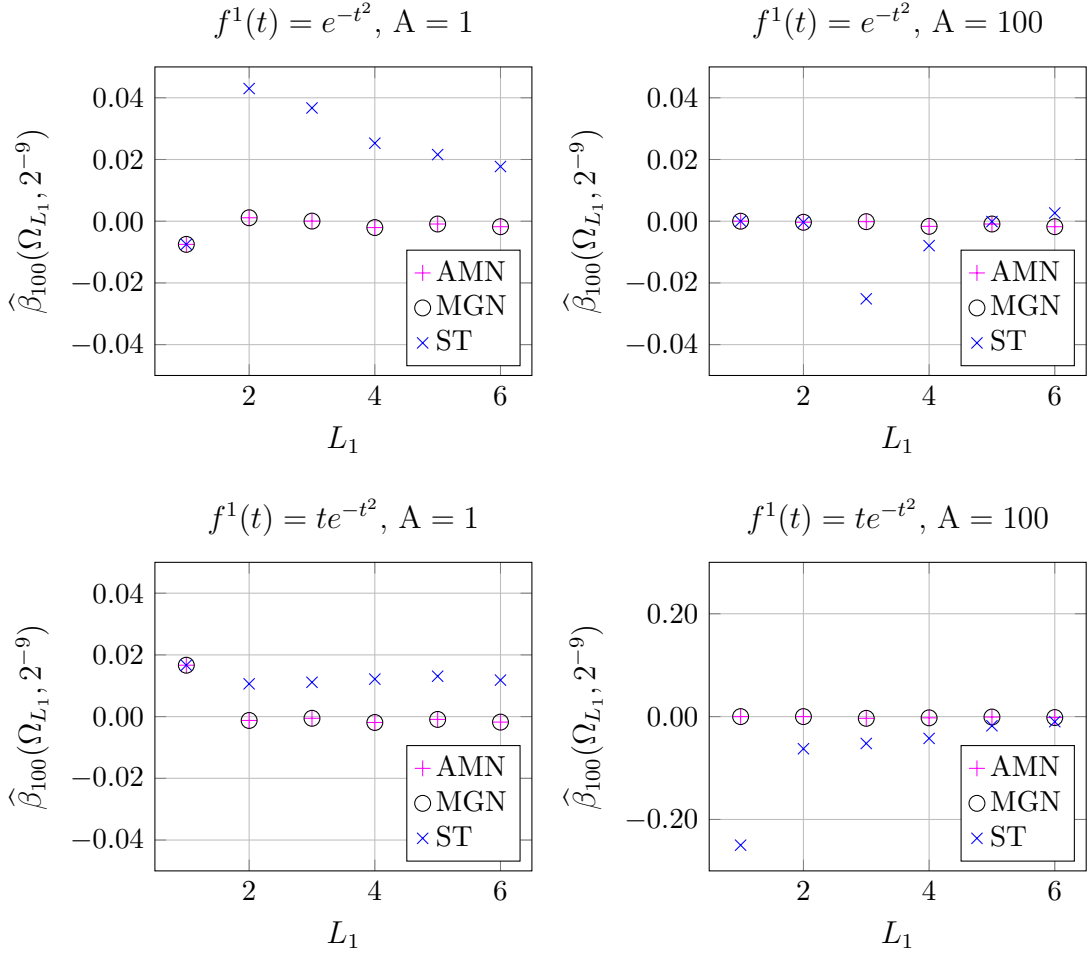
\begin{figure}[tbh]
\centering
\hspace{-5mm}
\begin{minipage}{.45\textwidth}
\begin{tikzpicture}
  \begin{axis}[
    title={$f^1(t) = e^{- t^2}$, $\conf=1$},
    legend entries={AMN,MGN,ST},
    width=\textwidth,
    xmin=0.5,xmax=6.5,
    ymin=-0.05,ymax=0.05,
    grid=minor,
    xlabel={$L_1$},
    ylabel={$\widehat{\beta}_{100}(\Omega_{L_1},2^{-9})$},
    ylabel style={
    },
    tick label style={font=\small},
    grid=major, 
    legend cell align=left,
    legend style={
      legend pos=south east, font=\small
    },
    scaled y ticks = false,
    y tick label style={/pgf/number format/.cd,fixed,fixed zerofill,precision=2},
  ]
\addplot+[octavePink, only marks, mark=+, mark size=3pt] table[x=L, y=AMN] {data/tableEstimator_gauss_100_7_6_A_1.dat}; 
\addplot+[black, only marks, mark=o, mark size=3pt] table[x=L, y=MGN] {data/tableEstimator_gauss_100_7_6_A_1.dat}; 
\addplot+[blue, only marks, mark=x, mark size=3pt] table[x=L, y=ST] {data/tableEstimator_gauss_100_7_6_A_1.dat}; 
\end{axis}
\end{tikzpicture}
\end{minipage}
\hspace{5mm}
\vspace{3mm}
\begin{minipage}{.45\textwidth}
\begin{tikzpicture}
  \begin{axis}[
    title={$f^1(t) = e^{- t^2}$, $\conf=100$},
    legend entries={AMN,MGN,ST},
    width=\textwidth,
    xmin=0.5,xmax=6.5,
    ymin=-0.05,ymax=0.05,
    grid=minor,
    xlabel={$L_1$},
    ylabel={$\widehat{\beta}_{100}(\Omega_{L_1},2^{-9})$},
    ylabel style={
    },
    tick label style={font=\small},
    grid=major, 
    legend cell align=left,
    legend style={
      legend pos=south east, font=\small
    },
    scaled y ticks = false,
    y tick label style={/pgf/number format/.cd,fixed,fixed zerofill,precision=2},
  ]
\addplot+[octavePink, only marks, mark=+, mark size=3pt] table[x=L, y=AMN] {data/tableEstimator_gauss_100_7_6_A_100.dat}; 
\addplot+[black, only marks, mark=o, mark size=3pt] table[x=L, y=MGN] {data/tableEstimator_gauss_100_7_6_A_100.dat}; 
\addplot+[blue, only marks, mark=x, mark size=3pt] table[x=L, y=ST] {data/tableEstimator_gauss_100_7_6_A_100.dat}; 
\end{axis}
\end{tikzpicture}
\end{minipage}
\\
\hspace{-5mm}
\begin{minipage}{.45\textwidth}
\begin{tikzpicture}
  \begin{axis}[
    title={$f^1(t) = t e^{- t^2}$, $\conf=1$},
    legend entries={AMN,MGN,ST},
    width=\textwidth,
    xmin=0.5,xmax=6.5,
    ymin=-0.05,ymax=0.05,
    grid=minor,
    xlabel={$L_1$},
    ylabel={$\widehat{\beta}_{100}(\Omega_{L_1},2^{-9})$},
    ylabel style={
    },
    tick label style={font=\small},
    grid=major, 
    legend cell align=left,
    legend style={
      legend pos=south east, font=\small
    },
    scaled y ticks = false,
    y tick label style={/pgf/number format/.cd,fixed,fixed zerofill,precision=2},
  ]
\addplot+[octavePink, only marks, mark=+, mark size=3pt] table[x=L, y=AMN] {data/tableEstimator_herm1_100_7_6_A_1.dat}; 
\addplot+[black, only marks, mark=o, mark size=3pt] table[x=L, y=MGN] {data/tableEstimator_herm1_100_7_6_A_1.dat}; 
\addplot+[blue, only marks, mark=x, mark size=3pt] table[x=L, y=ST] {data/tableEstimator_herm1_100_7_6_A_1.dat}; 
\end{axis}
\end{tikzpicture}
\end{minipage}
\hspace{5mm}
\begin{minipage}{.45\textwidth}
\begin{tikzpicture}
  \begin{axis}[
    title={$f^1(t) = t e^{- t^2}$, $\conf=100$},
    legend entries={AMN,MGN,ST},
    width=\textwidth,
    xmin=0.5,xmax=6.5,
    ymin=-0.3,ymax=0.3,
    grid=minor,
    xlabel={$L_1$},
    ylabel={$\widehat{\beta}_{100}(\Omega_{L_1},2^{-9})$},
    ylabel style={
    },
    tick label style={font=\small},
    grid=major, 
    legend cell align=left,
    legend style={
      legend pos=south east, font=\small
    },
    scaled y ticks = false,
    y tick label style={/pgf/number format/.cd,fixed,fixed zerofill,precision=2},
  ]
\addplot+[octavePink, only marks, mark=+, mark size=3pt] table[x=L, y=AMN] {data/tableEstimator_herm1_100_7_6_A_100.dat}; 
\addplot+[black, only marks, mark=o, mark size=3pt] table[x=L, y=MGN] {data/tableEstimator_herm1_100_7_6_A_100.dat}; 
\addplot+[blue, only marks, mark=x, mark size=3pt] table[x=L, y=ST] {data/tableEstimator_herm1_100_7_6_A_100.dat}; 
\end{axis}
\end{tikzpicture}
\end{minipage}
\caption{
Empirical mean of $\einz{\Theta}{r}{\delta}$ for different choices of $f^1$ and $\conf$, increasing domain $\Theta = \Omega_{L_1}$ for $L_1<L$, and the three methods. Note the different scale in the bottom right plot illustrating a systematic error in the ST method.}
\label{fig:gauss_100_7_6_A_1}
\end{figure} 

\subsection{Failure probabilities and consistency as resolution decreases}

Having tested the statistical properties of the computed zero sets under the input model \eqref{eq_rf} we now look into the accuracy of the computation for an individual realization $F$. We aim to test the existence of a map as in Theorem \ref{mth}, that assigns true zeros to computed ones with small distortion and almost bijectively. As a proxy for the (unavailable) ground truth $\{F=0\}$ we will use the output of AMN from data at very high resolution (computations with MGN yield indistinguishable results). We thus conduct a \emph{consistency experiment}, where the zero set of the same realization of $F$ is computed from samples on grids of different resolution, and the existence of a map as in Theorem \ref{mth} between both outputs is put to test.

Suppose that samples of a function $F$ are simulated on a high-resolution grid
$\gridl$ with spacing $\delta=\delta_{\text{Hi}}$ and restricted to the low-resolution grid
$\gridl$ with spacing $\delta=\delta_{\text{Lo}}$ by subsampling. 
We compute $\zr{\mbox{}}{\delta_{\text{Hi}}}\subseteq  \domainminusone$ from the high-resolution data using AMN, and $\zd{\mbox{}}{\delta_{\text{Lo}}}\subseteq  \domainminusone$ from the low-resolution data, using one of the algorithms described in Section \ref{sec_algos}. 

Second we construct a set 
$U \subseteq \zr{\mbox{}}{\delta_{\text{Hi}}}$ and a map $\phi\colon U  \rightarrow \zd{r}{\delta_{\text{Lo}}}$ with the following greedy procedure:

\pagebreak[1]
\noindent\rule{\textwidth}{1pt}
\noindent {\bf Construction of $U$ and $\phi$}\vspace{-0.2cm}\\
\noindent\rule{\textwidth}{1pt}

\alstep{Input}: Two subsets of $\domainminusone$: $\zr{\mbox{}}{\delta_{\text{Hi}}}$ and $\zd{\mbox{}}{\delta_{\text{Lo}}}$.

\medskip

\alstep{Step 1}: Choose a total order on $\zr{\mbox{}}{\delta_{\text{Hi}}}$. Let
$U$ and $U'$ be empty sets. If $\zr{\mbox{}}{\delta_{\text{Hi}}}$ is empty, output $U=\emptyset$ and $\phi=\emptyset$. Otherwise proceed to Step 2.

\medskip

\alstep{Step 2}: Let $\gp$ be the first element of $\zr{\mbox{}}{\delta_{\text{Hi}}} \setminus \left(U \cup U' \right)$.

\medskip

\alstep{Step 3}: Let
\begin{equation*}
\Phi(\gp)
= \big\{ \gpp \in \zd{\mbox{}}{\delta_{\text{Lo}}} \setminus \phi(U) : |\gp-\gpp|_{\infty} \leq 2 \delta_{\mathrm{Lo}}\big\}.
\end{equation*}
\noindent If $\Phi(\gp)$ is non-empty, add $\gp$ to $U$, and choose $\phi(\gp)\in \Phi(\gp)$ such that 
\begin{equation*}
\big|\gp - \phi(\gp)\big|_{\infty} 
= \min_{\gpp \in \Phi(\gp)} 	\big|\gp - \gpp\big|_{\infty}.
\end{equation*}
\noindent If $\Phi(\gp)$ is empty, add $\gp$ to $U'$.

\medskip

\alstep{Step 4}: If $\zr{\mbox{}}{\delta_{\text{Hi}}} \setminus \left(U \cup U' \right)$ is non-empty, repeat Steps 2--4.

\medskip

\alstep{Output}: The set $U$ and the map $\phi$.\\
\noindent\rule{\textwidth}{1pt}
The resulting function $\phi$ is injective and satisfies
\begin{align*}
|\phi(\lambda) - \lambda|_{\infty} \leq 2\delta_{\text{Lo}}.
\end{align*}
We say that the computation of $\zd{r}{\delta_{\text{Lo}}}$ was \emph{certified to be accurate} if
\begin{align}\label{eq_suc}
\zr{r}{\delta_{\text{Hi}}} \subseteq U
\quad \mbox{ and } \quad 
\zd{r}{\delta_{\text{Lo}}} 
\cap \Omega_{(L-1)-2\delta_{\text{Lo}}} \subseteq \phi(U).
\end{align}
In this case, the map $\phi$ satisfies properties analogous to the ones in Theorem \ref{mth}. Conceivably, other such maps may exist even if the one constructed in the greedy fashion fails to satisfy \eqref{eq_suc}. We define the following \emph{computation certificate}:
\begin{align*}
\mathcal{M}(\zr{\mbox{}}{\delta_{\text{Hi}}}, \zd{\mbox{}}{\delta_{\text{Lo}}}) = \begin{cases}
	0 & \mbox{if }\eqref{eq_suc} \mbox{ holds} \\
	1 & \mbox{otherwise.}
	\end{cases}
\end{align*}

The experiment to estimate failure probabilities as a function of the grid resolutions is fully specified as follows. 
We consider the input model \eqref{eq_rf} with $\noise=1$. We choose $F^1$ from Table \ref{table_fF1} and rescale it so that $\conf = \sup_{\zeta\in\mathbb{C}} e^{-\frac12 |\zeta|^2}|F^1(\zeta)| $ holds for the signal intensities $\conf=1$ and $100$.
We fix $L>0$ and $\delta_{\text{Hi}}>0$ and let $\discBargmann_{1}^{\delta_{\text{Hi}}}, \ldots, \discBargmann_{R}^{\delta_{\text{Hi}}}$ be $R$ independent realizations of samples of \eqref{eq_rf} on a grid $\gridl$ with resolution $\delta=\delta_{\text{Hi}}$, simulated as in Section~\ref{sec_sim}. 
These are then subsampled $j$ times with \eqref{eq_subs} yielding
$F_{r}^{\delta_{k}} = \subs^{(k)}(F_{r}^{\delta_{\text{Hi}}})$, $1 \leq k \leq j$.

We use AMN with input $F_{r}^{\delta_{\text{Hi}}}$ to obtain a set $\zr{r}{\delta_{\text{Hi}}}$.
Further, for each $1 \leq k \leq j$, we use each of the algorithms $M = \text{AMN, MGN, or ST}$ with input $F_{r}^{\delta_k}$ to obtain sets $\zd{r,M}{\delta^k}$.
Finally, we compute all the certificates $\mathcal{M}(\zr{r}{\delta_{\text{Hi}}}, \zd{r,M}{\delta_k})$ and average them over all realizations to obtain the following \emph{estimated upper bound for the failure probability of the method $M$ with grid spacing $\delta=\delta_k$}:
\begin{equation}\label{eq_estim}
p(\delta_k,M) := \frac{1}{R} \sum_{r=1}^R \mathcal{M}(\zr{r}{\delta_{\text{Hi}}}, \zd{r,M}{\delta_k}).
\end{equation}

We present in Table \ref{table_pfal} values obtained for $p(\delta_k,M)$ 
 for a resolution starting as high as $\delta_{\text{Hi}}=2^{-9}$, with a truncation of the window $g$ at $T=6$, in the target domain $\domainminusone$ for $L=7$, and $R=1000$ realizations of a zero-mean $\bt$. 
We also present the results for $F^1$ as in Table \ref{table_fF1}, rescaled to achieve a signal intensity $\conf=1$ or $\conf=100$.
We see that both AMN and MGN deliver very low failure probabilities (with MGN slightly outperforming AMN at lower resolutions). In contrast, ST delivers large failure probabilities even at high resolution.

\begin{table}[tbh] 
\centering 
\caption{Estimation of the failure probability $p(\delta_k,M)$
	in the sense of Theorem \ref{mth}, in the domain $\domainminusone$ with parameters $\delta_{\text{Hi}}=2^{-9}$, $T=6$, and $L=7$. Averages are computed over $R=1000$ and $R=100$ realizations for the pure noise and signal $f^1$ plus noise cases, respectively.}\vspace{1mm}
\label{table_pfal}  
\resizebox{\textwidth}{!}{%
\begin{tabular}{@{\extracolsep{4pt}}cccccccccccccccc} 
\toprule 
& \multicolumn{3}{c}{$f^1=0$} & \multicolumn{6}{c}{$f^1=\exp (-t^2)$} & \multicolumn{6}{c}{$f^1= t \exp (-t^2)$}\\
\midrule
& \multicolumn{3}{c}{} & \multicolumn{3}{c}{$\conf=1$}  & \multicolumn{3}{c}{$\conf=100$}  & \multicolumn{3}{c}{$\conf=1$}  & \multicolumn{3}{c}{$\conf=100$} \\
\cmidrule{5-7} 
\cmidrule{8-10} 
\cmidrule{11-13} 
\cmidrule{14-16} 
$\delta$ & AMN& MGN& ST & AMN& MGN& ST & AMN& MGN& ST & AMN& MGN& ST & AMN& MGN& ST\\ 
\midrule
$ 2^{-4} $ 	& $ 0.082 $ & $ 0.001 $ & $ 0.665 $  	& $ 0.07 $ & $ 0.00 $ & $ 0.67 $  	& $ 0.13 $ & $ 0.00 $ & $ 0.87 $  	& $ 0.07 $ & $ 0.00 $ & $ 0.64 $  	& $ 0.18 $ & $ 0.00 $ & $ 1.00 $  \\ 
$ 2^{-5} $ 	& $ 0.007 $ & $ 0.000 $ & $ 0.536 $  	& $ 0.00 $ & $ 0.00 $ & $ 0.50 $  	& $ 0.00 $ & $ 0.00 $ & $ 0.75 $  	& $ 0.00 $ & $ 0.00 $ & $ 0.52 $  	& $ 0.01 $ & $ 0.00 $ & $ 1.00 $  \\ 
$ 2^{-6} $ 	& $ 0.001 $ & $ 0.000 $ & $ 0.419 $  	& $ 0.00 $ & $ 0.00 $ & $ 0.41 $  	& $ 0.00 $ & $ 0.00 $ & $ 0.70 $  	& $ 0.00 $ & $ 0.00 $ & $ 0.42 $  	& $ 0.00 $ & $ 0.00 $ & $ 1.00 $  \\ 
$ 2^{-7} $ 	& $ 0.000 $ & $ 0.000 $ & $ 0.389 $  	& $ 0.00 $ & $ 0.00 $ & $ 0.32 $  	& $ 0.00 $ & $ 0.00 $ & $ 0.72 $  	& $ 0.00 $ & $ 0.00 $ & $ 0.34 $  	& $ 0.00 $ & $ 0.00 $ & $ 1.00 $  \\ 
$ 2^{-8} $ 	& $ 0.000 $ & $ 0.000 $ & $ 0.369 $  	& $ 0.00 $ & $ 0.00 $ & $ 0.29 $  	& $ 0.00 $ & $ 0.00 $ & $ 0.65 $  	& $ 0.00 $ & $ 0.00 $ & $ 0.33 $  	& $ 0.00 $ & $ 0.00 $ & $ 1.00 $  \\ 
$ 2^{-9} $ 	& $ 0.000 $ & $ 0.000 $ & $ 0.359 $  	& $ 0.00 $ & $ 0.00 $ & $ 0.31 $  	& $ 0.00 $ & $ 0.00 $ & $ 0.71 $  	& $ 0.00 $ & $ 0.00 $ & $ 0.32 $  	& $ 0.00 $ & $ 0.00 $ & $ 1.00 $  \\ 
\bottomrule 
\end{tabular}%
}
\end{table}

\section{Conclusions and outlook}\label{sec_con}
We analyzed the AMN algorithm under a stochastic input model aimed to describe the performance of the method in practice \cite{spielman2009smoothed}. One limitation of our analysis is the assumption that grid samples of the Bargmann transform are exactly given, while, more realistically, acquired data corresponds to averages of the signal values resulting from analog to digital conversion and numerical integration. Second, we considered complex-valued white noise, while in practice noise may also be colored or real-valued. We understand that the techniques used to prove Theorem \ref{mth} are general enough to allow for a refinement of the result in these directions. Similarly, we expect to be able to adapt our analysis of AMN to other ensembles of analytic functions, which are relevant in connection to other signal transforms. A more challenging open direction is the investigation of rigorous performance guarantees for MGN, which remains the algorithm of choice in practice.


\end{document}